\theoremstyle{plain}
\newtheorem{thm}{Theorem}[section]
\newtheorem{lem}[thm]{Lemma}
\newtheorem{cor}[thm]{Corollary}
\newtheorem{defn-lem}[thm]{Definition-Lemma}
\newtheorem{prop}[thm]{Proposition}
\theoremstyle{definition}
\newtheorem{rem}[thm]{Remark}
\def\md #1#2#3#4#5 {\left(
                        \begin{matrix}
             #1 & #2 \\
             #3 & #4
                        \end{matrix}
                      \right)- #5}
\def\ma #1#2#3#4 {\left(
                        \begin{matrix}
             #1 & #2 \\
             #3 & #4
                        \end{matrix}
                      \right)}
\def\End{\operatorname{End}}
\def\Ad{\operatorname{Ad}}
\def\Tr{\operatorname{Tr}}
\def\GL{\operatorname{GL}}
\newcommand{\mc}{\mathcal}
\newcommand{\lb}{\langle}
\newcommand{\rb}{\rangle}
\newcommand{\mb}{\mathbb}
\begin{document}
\title [On a gauge action on sigma model solitons]
       {On a gauge action on sigma model solitons}

\begin{abstract}
In this paper  we consider a gauge action on sigma model solitons over noncommutative tori as source spaces, with a target space made of two points introduced in \cite{DKL:Sigma}. Using new classes of solitons from Gabor frames, we quantify the condition about how to gauge a Gaussian to a prescribed Gabor frame.
\end{abstract}

\author{Hyun Ho \, Lee}

\address {Department of Mathematics\\
          University of Ulsan\\
         Ulsan, South Korea 44610 }
\email{hadamard@ulsan.ac.kr}

\keywords{Nonlinear sigma model, Gauge action, Gabor frames, Noncommutative torus, Noncommutative solitons}

\subjclass[2000]{Primary:58B20, 35C08. Secondary:58B16, 58J05, 42B35}
\date{}
\thanks{This research was supported by Basic Science Research Program through the National Research Foundation of Korea(NRF) funded by the Ministry of Education(NRF-2015R1D1A1A01057489).}
\maketitle

\section{Introduction}
Noncommutative analogues of non-linear $\sigma$-models appeared in \cite {DKL:Sigma, DKL:Sigma2} for the first time.  Later other examples including noncommutative spacetimes were considered by \cite{MR:Sigma, Lee:Sigma}. Among them  there is a continuous analog of the Ising model which consists of field maps from a noncommutative torus to a two-point space. Cosidering only an energy term in the action or excluding a gravity related term in the action, the stable maps are called noncommutative harmonic maps and in this particular case such maps correspond to some smooth projections in the noncommutative torus. It turned out that enegy minimizing ones carry a nontrivial topological charge and satisfy a Belavin-Polyakov bound \cite{L:Harmonic}. 

The construction of such maps depends on a geometric picture or a strong Morita equivalence. Since we can view a noncommutative torus $A_{\theta}$ as an endomorphism algebra of a suitable finitely generated projective bundle, we can think of a projection in $A_{\theta}$ as an operator on the bundle. The bundle  is in fact a bimodule over two different noncommutative tori with operator valued Hermitian structures compatible with each other. By choosing suitable vectors $\xi$ in the module, we consider Rieffel-type projections (see the paragraph after Proposition \ref{P:standard}) and lift the field equation on the noncommutative torus to an equation of $\xi$ on the module.  The vectors in the module both inducing Rieffel-type projections and satisfying a (anti) self duality equation are called noncommutative instantons or solitons following G. Landi.         

Using the idea of a natural transformation on the bundle a gauge action on noncommutative solitons is defined by the right multiplication of invertible elements $g$ of a different noncommutative torus $A_{\alpha}$.  This gauge action is well behaved with Rieffel-type projections so that the vector $\xi \cdot g$ generates a Rieffel-type projection again, and satisfy a (anti) self duality equation whenever $\xi$ does. An important class of $\xi$'s, that are Gaussians, is already known to be solutions for the self duality equation with a constant parameter and the condition when two Gaussians to be gauged each other is characterized in \cite{DKL:Sigma, DKL:Sigma2}. However, for the generic case it is not true that any solution vector $\xi$ could be gauged away to a Gaussian solution  since there is an obstruction in the form of $\overline{\partial}$-equation (see Corollary \ref{C:Gaugeaction}). Nonetheless, there is a good chance that a class of vectors $\xi$ could be gauged to a Gaussian and it is our purpose to provide an affirmative example for this question.  In this direction it is necessary to know noncommutative solitons other than Gaussians and recently new classes of sigma-model solitons are discovered by  Dabrowski, Landi, and Luef \cite{DLF:Sigma} under the observation that a problem in a time-frequency analysis and Gabor analysis is equivalent to find Rieffel-type projections in a noncommutative torus.   

This paper is organized as follows; In section \ref{S:sigmamodelontorus}, we explain a nonlinear $\sigma$-model on noncommutative tori introduced by Dabrowski, Krajewski, and Landi and define Rieffel-type projections using Hilbert module frames.  In section \ref{S:Gabor}, we introduce a class of functions called Gabor frames whose name is originated from Gabor analysis and clarify the condition for a vector $\xi$ to be a Gabor frame in terms of Hermitian structures on the module. Then, in Section \ref{S:Solitons}  we show that  a Gaussian could be gauged to a hyperbolic secant based on Theorem  \ref{T:Trace} which provides a useful tool to check whether a concrete Gabor frame is gauged to a Gaussian with Corollary \ref{C:Gaugeaction} in one hand.

\section{$\sigma$-model on noncommutative torus} \label{S:sigmamodelontorus}
Let $\mc{A_{\theta}}$ be a $*$-algebra consisting of power series of the form
\[a=\sum_{(m,n) \in \mathbb{Z}^2} a_{mn}U_1^mU_2^n\]
with $a_{mn}$ a complex-valued Schwarz function on $\mathbb{Z}^2$, or decreasing rapidly. Two unitaries $U_1, U_2$ have a commutation relation
\begin{equation}\label{E:NC}
U_2U_1=e^{2\pi i \theta}U_1U_2.
\end{equation}
For $\theta$ irrational, there is a unique faithful trace $\Tr$ on $A$ given by
\[\Tr(\sum_{m,n}a_{mn}U_1^mU_2^n)=a_{00}.\]

One can equip $\mc{A_{\theta}}$ with a norm $\|a\|=\sup_{(m,n) \in \mathbb{Z}^2} |a_{mn}| < \infty$ and the closure of $\mc{A_{\theta}}$ with respect to this norm is the universal $C\sp*$-algebra  $A_{\theta}$ generated by two unitaries satisfying the relation (\ref{E:NC}): $\mc{A_{\theta}}$ is dense in $A_{\theta}$ and is a pre-$C\sp*$-algebra.  Also, it is well-known that $\mc{A_{\theta}}$ is the smooth subalgebra of $A_{\theta}$, and closed under the holomorphic functional calculus \cite{CO:NCG}. Throughout the article, we are interested in the case $\theta$ irrational and call both $A_{\theta}$ and $\mc{A_{\theta}}$ noncommutative torus without confusion. 

To define the noncommutative action functional for morphisms from a pre-$C^*$ algebra $B$ to a pre-$C^*$-algebra $A$,  note that there is a formal prescription due to Mathai and Rosenberg \cite{MR:Sigma}; recall that a spectral triple $(A, H, D)$ is given by an involutive $*$-algebra represented as bounded operators on a Hilbert space $H$ and a self-adjoint (unbounded) operator $D$ with a compact resolvant such that commutators $[D, a]$ are bounded for all $a \in A$. A spectral triple $(A, H,D)$ is said to be $\emph{even}$ if the Hilbert space  $H$ is endowed with a super-grading $\gamma$ that commutes with all $a\in A$ and anti-commutes with $D$. In addition,  we say a spectral triple $(A, H, D)$ is $(2,\infty)$-summable if $\Tr_{\omega}(a|D|^{-2}) < \infty$ where $\Tr_{\omega}$ is the Dixmier trace. With a $(2, \infty)$-summable even spectral triple  $(A,H,D)$  one can define a positive Hochschild 2-cocycle $\psi_2$ given by
\[ \psi_2(a_0, a_1,a_2)= \frac{i}{2 \pi}\Tr_{\omega}((1+\gamma)a_0[a_1,D][a_2,D]|D|^{-2}).\]
We can compose it with a field map $\phi:B\to A$ where $B$ is a target space and $A$ represents a string worldsheet in  a noncommutative formalism of the classical $\sigma$-model. To assign a number to any field map we evaluate the induced cocycle on a suitably chosen element of $B\otimes B\otimes B$. Such an element is taken as the noncommutative analogue of the metric on the target, and  we choose a positive element of the form 
\[ G= \sum_i b_0idb_i^idb_2^i\] 
in the space of universal $2$-forms $\Omega^2(B)$. Then the quantity 
\[S(\phi)=\phi^*(\psi)(G) \ge 0\] is the action functional of non-linear $\sigma$-model in noncommutative geometry. 

There is a well-known even spectral triple $(\mc{A_{\theta}}, H, D)$ for the noncommutative torus $\mc{A_{\theta}}$ with 
\[ \gamma=\ma 1 0 0-1 ,  \, D=\partial_1 \sigma_1+\partial_2\sigma_2 \]
where $\sigma_1,\sigma_2$ are Pauli matrices (see Section \ref{S:Solitons} for derivations $\partial_i$'s). When the target space is a two-point space, then $B$ is just two dimensional complex vector space $\mathbb{C}^2$. Since a morphism $\phi: \mathbb{C}^2 \to \mc{A_{\theta}}$ is determined by the image of $e$ the characteristic function on a point, we denote it by a projection  $ p\in \mc{A_{\theta}}$.  Taking  $G=de de\in \Omega^2(\mathcal{B})$ the action functional can be written as
\begin{equation}\label{E:Action}
 S(p)=\Tr(\partial p \overline{\partial}p),
\end{equation}
where $\partial$ or $\overline{\partial}$ are the derivations coming from the complex structure on  noncommutative torus(see Section \ref{S:Solitons}). It is known from  \cite{DKL:Sigma, DKL:Sigma2} or \cite{Lee:Sigma} that the Euler-Lagrange equation for this functional is
\begin{equation}\label{E:EL}
p(\Delta p) -(\Delta p) p=0
\end{equation}
where $\Delta$ is the Laplacian. 

It is well known that there exist a lot of projections in $A_{\theta}$, which is of real rank zero \cite{EE:Irrational},  contrary to the fact that a noncommutative torus is a deformation quantization of commutative two torus. But it is unclear whether there are smooth projections in $\mc{A_{\theta}}$. Thus it was a remarkable discovery of M. Rieffel to construct a projection in $\mc{A_{\theta}}$ \cite{R:NonTorus}, so that a morphism $\phi:\mb{C}^2 \to \mc{A_{\theta}}$ is well-defined. In fact, there is a systematic way to construct projections in a $*$-algebra with a left action module and the dual action algebra. Accordingly we call such projections Rieffel-type projections; for the moment, $A$ is a $*$-algebra. Suppose that there is a $*$-algebra $B$ that is strongly Morita equivalent to A via the bimodule $\Xi$ (see Section \ref{S:Gabor} for the definition).
If we denote the $A$($B$)-valued hermitian inner product by ${}_{A}\lb \, , \,\rb (\lb \, ,\, \rb_B) $, then ${}_{A}\lb \xi, \xi \rb$ is a projection in $A$ provided that $\lb \xi, \xi \rb_B=1_B$. More generally, if we have $\xi_1, \xi_2,\dots, \xi_n$ in $\Xi$, then the matrix, whose $i,j$ entry is ${}_{A}\lb \xi_i, \xi_j\rb $, is a projection in $M_n(A)$ provided that   $\sum_{k=1}^{n}\lb \xi_k, \xi_k \rb_B=1_B$. We call the set $\{ \xi_1, \dots, \xi_n\}$ a module frame for $\Xi$. More precisely, it is called a \emph{(Parseval) standard module frame} $\{ \xi_1, \dots, \xi_n\}$ for $\Xi$ \cite{FL:Frame}. In general, a standard module frame for $\Xi$ is a set $\{ \xi_1, \dots, \xi_n\}$ such that
 \begin{equation}\label{E:Framecondition}
 c_1 {}_A \lb \xi, \xi \rb \le \sum_{i} {}_A\lb \xi, \xi_i \rb {}_A\lb \xi_i, \xi \rb \le c_2 {}_A\lb \xi, \xi \rb, \quad \text{for all $\xi \in \Xi$,}
 \end{equation}
  for positive constants $c_1$ and $c_2$. Since we are interested in a projection in $A$ rather in $M_{n}(A)$ the matrix algebra of $A$, we restrict ourselves to the case of a single frame $\{\eta\}$ for $\Xi$.
The following is one of strategies to find a single standard module frame which was used by many experts.
\begin{prop}\label{P:standard}
Let $\eta$ be an element such that $\lb \eta, \eta \rb_B$ is invertible. Then $\{\eta\}$ is a standard module frame.
\end{prop}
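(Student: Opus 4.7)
The plan is to exploit the compatibility ${}_{A}\lb \xi,\eta\rb \cdot \zeta = \xi \cdot \lb \eta,\zeta\rb_B$ of the two Hermitian structures in order to invert the role of $\eta$. Writing $b := \lb \eta,\eta\rb_B$ and specializing this identity to $\zeta = \eta$, one obtains $\xi \cdot b = {}_{A}\lb \xi,\eta\rb \cdot \eta$, hence by the assumed invertibility of $b$,
\[
\xi \;=\; {}_{A}\lb \xi,\eta\rb \cdot (\eta \cdot b^{-1}) \qquad \text{for every } \xi \in \Xi.
\]
Inserting this expression into both slots of ${}_{A}\lb \cdot,\cdot\rb$ and using sesquilinearity (${}_{A}\lb a\zeta_1,\zeta_2\rb = a \cdot {}_{A}\lb \zeta_1,\zeta_2\rb$, conjugate in the second slot) produces the single key factorization
\[
{}_{A}\lb \xi,\xi\rb \;=\; {}_{A}\lb \xi,\eta\rb \cdot M \cdot {}_{A}\lb \eta,\xi\rb, \qquad M \;:=\; {}_{A}\lb \eta\cdot b^{-1},\, \eta\cdot b^{-1}\rb \in A.
\]

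From this identity both frame inequalities drop out quickly. The element $M$ is positive in $A$ with $M \le \|M\|\cdot 1$ in the multiplier algebra; since conjugation by $x := {}_{A}\lb \xi,\eta\rb$ preserves positivity, $xMx^* \le \|M\|\cdot xx^*$, which rearranges into the lower frame bound
\[
\|M\|^{-1}\cdot {}_{A}\lb \xi,\xi\rb \;\le\; {}_{A}\lb \xi,\eta\rb \cdot {}_{A}\lb \eta,\xi\rb.
\]
A direct calculation using compatibility gives $\lb \eta b^{-1},\eta b^{-1}\rb_B = b^{-1} b\, b^{-1} = b^{-1}$, so by the norm equality $\|{}_{A}\lb \zeta,\zeta\rb\|_A = \|\lb \zeta,\zeta\rb_B\|_B$ on a Morita equivalence bimodule one concludes $\|M\| = \|b^{-1}\|$, and I would take $c_1 = \|b^{-1}\|^{-1}$. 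For the upper bound I would invoke the Cauchy--Schwarz inequality for the $A$-valued Hermitian structure, ${}_{A}\lb \xi,\eta\rb \cdot {}_{A}\lb \eta,\xi\rb \le \|{}_{A}\lb \eta,\eta\rb\| \cdot {}_{A}\lb \xi,\xi\rb$, yielding $c_2 = \|{}_{A}\lb \eta,\eta\rb\| = \|b\|$.

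The argument has no substantial obstacle; the only delicate point is careful bookkeeping of the sesquilinearity of ${}_{A}\lb \cdot,\cdot\rb$ together with the right-$B$-action identity ${}_{A}\lb \xi\cdot c,\eta\rb = {}_{A}\lb \xi,\eta\cdot c^*\rb$, which is a direct consequence of the compatibility axiom and of the $B$-sesquilinearity of $\lb\cdot,\cdot\rb_B$, and which lets us move $b^{\pm 1}$ cleanly across the slots. Once these conventions are pinned down, the factorization displayed above is an immediate algebraic computation, and the proposition reduces entirely to the standard Cauchy--Schwarz / positivity estimates inside the $C^*$-algebra $A$.
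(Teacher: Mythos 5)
Your argument is correct, and the constants you extract ($c_1=\|\lb \eta,\eta\rb_B^{-1}\|^{-1}$ and $c_2=\|\lb \eta,\eta\rb_B\|$) agree with the paper's, but you reach them by a genuinely different route. The paper works entirely from the single identity ${}_A\lb \xi,\eta\rb\,{}_A\lb \eta,\xi\rb={}_A\lb \xi\cdot\lb \eta,\eta\rb_B,\xi\rb$ --- the same compatibility axiom you invoke, but used without inverting anything: invertibility of the positive element $b=\lb \eta,\eta\rb_B$ gives $c_1 1_B\le b\le c_2 1_B$ by functional calculus, and both frame inequalities then drop out simultaneously from the positivity of the map $b\mapsto{}_A\lb \xi\cdot b,\xi\rb$. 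You instead invert the relation to obtain the reconstruction formula $\xi={}_A\lb \xi,\eta\rb\cdot(\eta\cdot b^{-1})$, factor ${}_A\lb \xi,\xi\rb$ through ${}_A\lb \xi,\eta\rb$, and handle the two bounds by separate devices (conjugation preserves positivity for the lower bound; the module Cauchy--Schwarz inequality for the upper). Your version is longer and leans on two additional standard facts about imprimitivity bimodules (the norm equality $\|{}_A\lb \zeta,\zeta\rb\|=\|\lb \zeta,\zeta\rb_B\|$ and Cauchy--Schwarz), but it buys the explicit reconstruction identity, which is precisely the dual-window formula that resurfaces later in the paper as Wexler--Raz duality; the paper's version is more economical and makes the sufficiency of invertibility visible in one line. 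One point worth making explicit in either treatment is the positivity of $b\mapsto{}_A\lb \xi\cdot b,\xi\rb$ for $b\ge 0$ (equivalently the identity ${}_A\lb \xi\cdot c,\eta\rb={}_A\lb \xi,\eta\cdot c^*\rb$ that you flag), since both arguments rely on it silently.
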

\begin{proof}
 Suppose that a positive element $\lb \eta, \eta \rb$ is invertible, then its spectrum is bounded below for a positive number $c_1 >0$ and bounded above by the norm of it, say $c_2$. Thus, by the functional calculus, $c_1 1_B \le \lb \eta, \eta \rb \le c_2 1_B$. Since \[ \begin{split}
  {}_A\lb \xi, \eta \rb {}_A\lb \eta, \xi \rb&={}_A\lb {}_A\lb \xi, \eta \rb \cdot \eta, \xi \rb\\
                                             &={}_A\lb \xi \cdot \lb \eta, \eta \rb_B, \xi \rb,
\end{split}\] (\ref{E:Framecondition}) is satisfied. Therefore the invertibility of $\lb \eta, \eta \rb_B$ is a sufficient condition for $\eta$ to be a standard module frame.
\end{proof}

  Once we have a frame $\eta$ as in Proposition \ref{P:standard}, we get a Parseval one $\tilde{\eta}$ by the normalization and obtain a  projection ${}_A\langle \tilde{\eta}, \tilde{\eta} \rangle$ in $A$. We call such a projection Rieffel-type projection and the first example of Rieffel-type projections in $\mc{A_{\theta}}$ was found by M. Rieffel using  a compactly supported smooth function as $\eta$ \cite{R:NonTorus}, but later F. Boca discovered another one using $\eta$, a Gaussian (A hard computation involving a quantum theta function was needed to show that $\{ \eta \}$ is a standard frame)\cite{B:QT}. Recently, F. Luef noticed that a fundamental duality principle in Gabor analysis is linked to the invertibility of $\lb \eta, \eta \rb_B$ in the case of noncommutative torus and found a large class of standard module frames which include previous examples of  Rieffel and Boca \cite{L:Gabor}. Surprisingly this class of standard module frames gives rise to minimizing solutions of (\ref{E:EL}) as claimed in \cite{DLF:Sigma}. We  are going to explain this fact more carefully and give a detailed proof in Section \ref{S:Solitons}.

\section{Gabor frames and Noncommutative tori}\label{S:Gabor}
In this section, we summarize the strong Morita equivalence of $\mc{A_{\theta}}$ with its dual $B$ in terms of Gabor analysis from  \cite{L:Gabor},\cite{L:VBoverNT}, and explain a class of functions generates Rieffel-type projections in $\mc{A_{\theta}}$.

We say that two pre $C\sp*$-algebras $A$ and $B$ are strongly Morita equivalent if there is  a bimodule $\Xi$, on which both $A$ and $B$ act left and right respectively, equipped with  $A$-valued inner product ${}_{A}\lb \, , \, \rb$ and $B$-valued inner product $\lb \, ,\, \rb_B$ which satisfy the following conditions; for any $f, g \in \Xi$, and $a\in A, b\in B$
\begin{align*}
{}_{A}\lb f, g \rb^*={}_{A}\lb g, f \rb &, \quad \lb f, g\rb_B^{*}= \lb g, f \rb_B, \\
{}_{A}\lb a \cdot f, g \rb= a \cdot {}_{A}\lb f,g \rb &, \quad \lb f, g\cdot b \rb_B=\lb f, g \rb_B \cdot b, \\
f\cdot \lb g, h\rb_B &= {}_{A}\lb f, g\rb \cdot h,\\
(a\cdot f) \cdot b &= a\cdot (f \cdot b).
\end{align*}
 
Let $\pi:(x,\omega)\in \mathbb{R}^2  \to B(L^2(\mathbb{R}))$ be a (projective) representation defined by

\[(\pi(x, \omega)\xi) (t)= e^{2\pi i t \omega}\xi(t-x)\], or
\[\pi(x, \omega)=M_{\omega}T_{x}\] where $(M_{\omega}\xi )(t)=e^{2\pi i t \omega }\xi(t)$ and $(T_x \xi)(t)=\xi(t-x)$.
Then  the canonical commutation relation for $M_{\omega}$ and $T_x$  holds,
\begin{equation}\label{E:CCR}
M_{\omega}T_x=e^{2\pi i x\omega}T_xM_{\omega}.
\end{equation}
It follows that
\[\pi(z)\pi(z')=e^{-2\pi i x \cdot\eta} \pi(z+z') \text{\, for $z=(x, \omega)$, $z'=(y,\eta)$}.\]
We can easily check $c:\mathbb{R}\times \mathbb{R} \to \mathbb{T}$  defined by $c(z,z')=e^{-2\pi i x \eta}$ for $z=(x, \omega)$, $z'=(y,\eta)$ is a 2-cocycle.

Let $\Lambda$ be a lattice of $\mathbb{R}^2$( for our purpose, we may assume that $\Lambda$ is of the form $\theta\mathbb{Z}\times \mathbb{Z}$). Then  $\mathcal{G}(g, \Lambda )=\{ \pi(\lambda)g \mid \lambda \in \Lambda \}$ in $L^{2}(\mathbb{R})$ is said to be a Gabor system.  A Gabor system is a Gabor frame for $L^2(\mathbb{R})$ if there exist $\alpha, \beta >0$ such that
\begin{equation}\label{E:Gabor}
\alpha\| f \|_2^2 \le \sum_{\lambda \in \Lambda}| \lb f, \pi(\lambda)g \rb|^2 \le \beta \|f\|_2^2.
\end{equation}
In this case, $g$ is called a Gabor atom or window in time-frequency analysis, but we call it a Gabor frame abusing notation since $g$ will give rise to a module frame in our setting.\\
Recall that $l^1(\Lambda, c)$ is a $l^1(\Lambda)$ with a twisted convolution of $\mathbf{a}$ and $\mathbf{b}$ defined by
\[   \mathbf{a} \natural \mathbf{b}(\lambda)= \sum_{\mu\in \Lambda } a(\mu)b(\lambda-\mu)c(\mu, \lambda-\mu),\]
and involution $\mathbf{a}^*=(a^*(\lambda))$ of $\mathbf{a}$ is given by
\[a^*(\lambda)=\overline{c(\lambda, \lambda)a(-\lambda)} \,\, \text{for $\lambda \in \Lambda$.}\]
Then $C^*(\Lambda, c)$ is the completion of $l^1(\Lambda, c)$ under $\pi$. More precisely,  $C^*(\Lambda, c)$ is the completion of the involutive representation of $\mathbf{a}$'s,   \[\pi(\mathbf{a})=\sum_{\lambda \in \Lambda} a(\lambda)\pi(\lambda) \,\, \text{for $\mathbf{a}=(a(\lambda))_{\lambda \in \Lambda}$} \]
with the product
\[\pi(\mathbf{a})\pi(\mathbf{b})=\pi(\mathbf{a}\natural \mathbf{b}),\]
and the involution
\[ \pi(\mathbf{a})^*= \pi (\mathbf{a}^*).\]
 Weighted analogues of the twisted group algebra are needed to obtain $A$ in terms of Gabor analysis ; for $s\ge 0$ let $l^1_s(\Lambda)$ be the space of all sequences $\mathbf{a}$ with $\| \mathbf{a} \|_{s}=\sum_{\lambda \in \Lambda} |a(\lambda)|(1+|\lambda |^2)^{s/2}$. We consider $(l_s^1(\Lambda), \natural, \sp)$ and the involutive representation of it, so
\[ \mathcal{A}^1_s (\Lambda, c)=\{ T\in B(L^2(\mathbb{R}))\mid T=\sum_{\lambda \in \Lambda} a(\lambda)\pi(\lambda), \, \|\mathbf{a}\|_s < \infty\}\]
is an involutive algebra with respect to the norm $\displaystyle \| T \|=\sum_{\lambda \in \Lambda} |a(\lambda)| (1+| \lambda|^2)^{s/2}$. It turns out that $\mathcal{A}^{\infty}(\Lambda, c)=\bigcap_{s\ge 0} \mathcal{A}^1_s (\Lambda, c) $ is equal to  the smooth noncommutative torus $A$.

 A dual lattice to $\Lambda$ is defined by
\[ \Lambda^{\circ}=\{ z\in \mathbb{R}^2 \mid \pi(\lambda)\pi(z)=\pi(z)\pi(\lambda) \,\, \text{for all $\lambda \in \Lambda$}\}.\]
Then we have $C^*(\Lambda^\circ, \bar{c}), \mathcal{A}^1_s (\Lambda^{\circ}, \bar{c}), \mathcal{A}^{\infty}(\Lambda^{\circ}, \bar{c})$ similarly. Let $M_s^1(\mathbb{R})$ be the modulation space in time-frequency analysis. More explicitly,
\[M_s^1(\mathbb{R})= \{ f \in L^2(\mathbb{R}) \mid \| f \|_{M^1_s}:= \int_{\mathbb{R}} |V_{\phi} f (x, \omega)| (1+|x|^2+|\omega|^2)^{s/2} dx d\omega < \infty \}\] where $V_{\phi}f$ is the \emph{short-time Fourier transform} of a function $f$ with respect to the window $\phi$, which is defined by $\lb f, \pi(x,\omega) \phi \rb_{L^2(\mathbb{R})}$. We can characterize the Schwartz space in terms of modulation spaces:
 \[ \mathscr{S}(\mathbb{R})=\bigcap_{s\ge 0} M^1_s(\mathbb{R}).\]
 
\begin{thm}\cite[Theorem 2.3]{L:Gabor}
For any $s\ge 0$ $M_s^1(\mathbb{R})$ is an equivalence bimodule between $\mathcal{A}^1_s(\Lambda, c)$ and $\mathcal{A}^1_s(\Lambda^{\circ}, \bar{c})$ and $\mathscr{S}(\mathbb{R})$ is an equivalence bimodule between $\mathcal{A}^{\infty}(\Lambda, c)$ and $\mathcal{A}^{\infty}(\Lambda^{\circ}, \bar{c})$.
\end{thm}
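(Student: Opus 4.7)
The plan is to verify each of the bimodule axioms listed in Section \ref{S:Gabor} with explicit formulas borrowed from Gabor analysis. On $M^1_s(\mathbb{R})$ I would define the two inner products through short-time Fourier transform coefficients,
\[
{}_{\mathcal{A}^1_s(\Lambda,c)}\langle f, g \rangle = \sum_{\lambda \in \Lambda} \langle f, \pi(\lambda) g\rangle_{L^2}\,\pi(\lambda),
\quad
\langle f, g \rangle_{\mathcal{A}^1_s(\Lambda^\circ,\bar c)} = |\Lambda|^{-1}\sum_{\mu \in \Lambda^\circ} \langle g, \pi(\mu) f\rangle_{L^2}\,\pi(\mu),
\]
where $|\Lambda|$ is the covolume of $\Lambda$, and define the left and right actions as the obvious operator actions of $\pi(\mathbf{a})$ and $\pi(\mathbf{b})$. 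The first step is to check that the coefficient sequences lie in $\ell^1_s(\Lambda)$ and $\ell^1_s(\Lambda^\circ)$ respectively; this follows from the amalgam-space sampling estimate $\|(V_g f)|_{\Lambda}\|_{\ell^1_s}\lesssim \|f\|_{M^1_s}\|g\|_{M^1_s}$, which is standard from the very definition of $M^1_s$. Taking intersections over all $s\ge 0$ and invoking $\mathscr{S}(\mathbb{R})=\bigcap_{s\ge 0}M^1_s(\mathbb{R})$ then transfers the whole statement to the Schwartz level automatically, so I would work exclusively in $M^1_s$ from here on.

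Next I would dispatch the easier axioms. Hermitian symmetry and sesquilinearity reduce to the covariance identity $\pi(\lambda)^\ast=\overline{c(\lambda,\lambda)}\pi(-\lambda)$ combined with the definition of the twisted involution $a^\ast(\lambda)=\overline{c(\lambda,\lambda)a(-\lambda)}$ recalled in Section \ref{S:Gabor}. Linearity and the compatibility ${}_A\langle a\cdot f,g\rangle = a\cdot {}_A\langle f,g\rangle$ follow by writing $a=\pi(\mathbf{a})$, expanding, and invoking the twisted convolution product defining the algebra structure on $\ell^1_s(\Lambda,c)$. The cross-commutativity $(a\cdot f)\cdot b = a\cdot(f\cdot b)$ of the left and right actions reduces immediately to $\pi(\lambda)\pi(\mu)=\pi(\mu)\pi(\lambda)$ for $\lambda\in\Lambda$, $\mu\in\Lambda^\circ$, which is precisely the defining property of the dual lattice.

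The main obstacle is the associativity condition
\[
f\cdot \langle g,h\rangle_{\mathcal{A}^1_s(\Lambda^\circ,\bar c)} = {}_{\mathcal{A}^1_s(\Lambda,c)}\langle f,g\rangle\cdot h,
\]
which is the so-called \emph{fundamental identity of Gabor analysis}. After expanding through the definitions it becomes a Poisson-summation type identity relating $\sum_{\lambda\in\Lambda}\pi(\lambda)$ and $\sum_{\mu\in\Lambda^\circ}\pi(\mu)$ applied to suitable coefficient functions. I would prove it first on $\mathscr{S}(\mathbb{R})$ by applying classical Poisson summation to the vector-valued function $z\mapsto V_g f(z)\,\pi(z)h$ and using the characterization of $\Lambda^\circ$; the extension to $M^1_s$ then follows by density, with absolute convergence of both sides guaranteed by the sampling estimate of the first paragraph. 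Finally, the non-degeneracy of the inner products, i.e.\ density of their ranges in the respective algebras, follows from the existence of at least one Gabor atom whose frame operator is invertible (for instance a Gaussian), since then ${}_A\langle \eta,\eta\rangle$ lies in the algebra together with its inverse and generates a dense $\ast$-ideal via holomorphic functional calculus, which is available because $\mathcal{A}^1_s$ is closed under that calculus inside the full twisted $C^\ast$-algebra.
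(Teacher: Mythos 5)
The paper does not actually prove this theorem: it is imported verbatim from Luef \cite[Theorem 2.3]{L:Gabor}, whose proof in turn rests on Rieffel's equivalence-bimodule construction for noncommutative tori and on Feichtinger--Gr\"ochenig modulation-space theory. Your outline reconstructs essentially that argument, and its architecture is sound: the inner products you write down agree (up to the convention of where $\pi(\lambda)$ sits and a reindexing $\mu\mapsto-\mu$) with the formulas recorded later in Section \ref{S:Gabor}, and you correctly isolate the associativity condition $f\cdot\lb g,h\rb_B={}_A\lb f,g\rb\cdot h$ --- the fundamental identity of Gabor analysis, obtained by symplectic Poisson summation over $\Lambda$ and $\Lambda^{\circ}$ --- as the only step with real content.

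Two points are weaker than you suggest. First, the sampling estimate $\|(V_gf)|_{\Lambda}\|_{\ell^1_s}\lesssim\|f\|_{M^1_s}\|g\|_{M^1_s}$ does not follow ``from the very definition of $M^1_s$'': the definition controls a continuous integral of $|V_\phi f|$ against one fixed window, and passing to lattice sums requires the amalgam-space embedding $V_gf\in W(L^\infty,\ell^1_s)$ for $f,g\in M^1_s$, which is a genuine theorem of Feichtinger--Gr\"ochenig theory and is precisely what makes $M^1_s$ (rather than $L^2$, where the fundamental identity is known to fail pointwise) the right habitat. Second, your fullness argument works on one side only. Invertibility of $\lb\eta,\eta\rb_B$ for a single atom $\eta$ does make the $B$-valued inner product full, but ${}_A\lb\eta,\eta\rb$ is never invertible in $A$ for a proper module --- after normalization it is a projection of trace $\mathrm{vol}(\Lambda)=\theta<1$ --- so it cannot ``lie in the algebra together with its inverse.'' Fullness of the $A$-valued inner product must come from elsewhere: its range is a nonzero self-adjoint two-sided ideal, and one concludes by simplicity of the irrational rotation $C^*$-algebra together with spectral invariance of $\mathcal{A}^1_s(\Lambda,c)$ in its $C^*$-completion. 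With those two repairs the proposal coincides with the proof the paper is citing.
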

Although we do not need here, the strong Morita equivalence between $C\sp*(\Lambda, c)$ and $C\sp*(\Lambda^{\circ}, \bar{c})$ can be obtained using the above theorem.  From now on, $B$ denotes  $\mathcal{A}^{\infty}(\Lambda^{\circ}, \bar{c})$, which is also a smooth noncommutative torus for $-1/\theta$ \cite{R:NonTorus}. We note that for $f,g\in \mathscr{S}(\mathbb{R})$ 
\[{}_A\lb f, g\rb= \sum_{\lambda}\lb f,\pi(\lambda)g\rb \pi(\lambda),\]
\[\lb f,g \rb_B =\sum_{\lambda^{\circ}}\lb \pi(\lambda^{\circ})g, f \rb \pi^*(\lambda^{\circ}),\]
\[\pi(\mathbf{a})\cdot f=\sum_{\lambda}a(\lambda)\pi(\lambda)f \quad \text{for $\mathbf{a}\in l^1(\Lambda)$},\]
\[f\cdot \pi(\mathbf{b})=vol(\Lambda)^{-1}\sum_{\lambda^{\circ}}b(\lambda^{\circ})\pi^*(\lambda^{\circ})f\quad \text{for $\mathbf{b}\in l^1(\Lambda^{\circ})$}.\]
 The following theorem shows that Rieffel-type projections are linked to a hard problem in Gabor analysis.
 \begin{thm}
Suppose  $g \in \mathscr{S}(\mathbb{R})$. Then $\mathcal{G}(g, \Lambda)$ is a Gabor frame if and only if $\lb g, g \rb_B$ is invertible.
\end{thm}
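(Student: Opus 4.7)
My approach is to identify the Gabor frame operator $S_{g,g}f:=\sum_{\lambda\in\Lambda}\lb f,\pi(\lambda)g\rb\,\pi(\lambda)g$ with the operator of right multiplication by $\lb g,g\rb_B$, and then to pass between three notions of invertibility: of $S_{g,g}$ on $L^2(\mb R)$, of $\lb g,g\rb_B$ in the $C^*$-closure of $B$, and of $\lb g,g\rb_B$ inside the smooth pre-$C^*$-algebra $B$ itself.

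First I would establish the identity $S_{g,g}f=f\cdot\lb g,g\rb_B$ for $f\in\mathscr{S}(\mb R)$. Applying the displayed formula ${}_A\lb f,g\rb=\sum_{\lambda}\lb f,\pi(\lambda)g\rb\,\pi(\lambda)$ to the vector $g$ yields ${}_A\lb f,g\rb\cdot g=S_{g,g}f$, and the bimodule associativity axiom ${}_A\lb f,g\rb\cdot g=f\cdot\lb g,g\rb_B$ listed in Section \ref{S:Gabor} among the defining properties of strong Morita equivalence then identifies $S_{g,g}$ with right multiplication by $\lb g,g\rb_B$ on the dense subspace $\mathscr{S}(\mb R)\subset L^2(\mb R)$.

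Next I would rewrite the Gabor frame condition operator-theoretically. The polarization $\sum_{\lambda}|\lb f,\pi(\lambda)g\rb|^2=\lb S_{g,g}f,f\rb$ shows that the bounds (\ref{E:Gabor}) are equivalent to $\alpha I\le S_{g,g}\le\beta I$ with $\alpha>0$, i.e.\ to the positive operator $S_{g,g}$ being bounded and boundedly invertible on $L^2(\mb R)$. By the previous step this is the same as the right-multiplication operator $R_{\lb g,g\rb_B}$ being invertible in $B(L^2(\mb R))$, and since the right action of $B$ on $L^2(\mb R)$ is faithful and extends continuously to its $C^*$-completion (a rotation $C^*$-algebra for $-1/\theta$), this is equivalent to $\lb g,g\rb_B$ being invertible in that $C^*$-closure.

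The last step, and the only non-formal part of the argument, is to upgrade $C^*$-invertibility to invertibility inside the smooth pre-$C^*$-algebra $B=\mc A^{\infty}(\Lambda^{\circ},\bar c)$. For this I would invoke spectral invariance of $B$ inside its $C^*$-completion, i.e.\ the fact that $B$ is closed under the holomorphic functional calculus --- the analogue for $B$ of the property of $\mc A_\theta$ cited in Section \ref{S:sigmamodelontorus} from \cite{CO:NCG}. Since $g\in\mathscr{S}(\mb R)$ already forces $\lb g,g\rb_B\in B$, applying the inverse function $t\mapsto t^{-1}$ (holomorphic on a neighborhood of the positive spectrum of $\lb g,g\rb_B$) then produces the inverse inside $B$. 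This spectral-invariance step is where I expect the main care will be needed; the rest consists of rewriting identities already displayed in the paper.
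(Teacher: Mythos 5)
Your proof is correct, and its first half --- the identification $S_{g,g}f = {}_A\lb f,g\rb\cdot g = f\cdot\lb g,g\rb_B$ via the bimodule associativity axiom, followed by the observation that the frame inequalities say exactly that the positive operator $S_{g,g}$ is bounded below and above on $L^2(\mb R)$ --- is precisely the computation the paper performs (the paper writes the frame operator as $\Theta_{g,g}^{\Lambda}$ and notes $\lb\Theta_{g,g}^{\Lambda}(f),f\rb=\sum_{\lambda}|\lb f,\pi(\lambda)g\rb|^2$). Where you diverge is in how invertibility is transferred from the operator to the element $\lb g,g\rb_B$. The paper stays entirely on the module side: it invokes the Morita-theoretic isomorphism $B\cong\End_A(\Xi)$, $b\mapsto\phi_b$ with $\phi_b(\xi)=\xi\cdot b$, so that invertibility of $\Theta_{g,g}^{\Lambda}$ as an $A$-module endomorphism is by definition the same as invertibility of $\lb g,g\rb_B$ in $B$. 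You instead go through $B(L^2(\mb R))$: faithfulness of the right action, spectral permanence in the $C^*$-completion, and then spectral invariance of the smooth subalgebra $\mc A^{\infty}(\Lambda^{\circ},\bar c)$ to pull the inverse back into $B$. Your route is longer but arguably more honest: the paper's one-line equivalence ``$\mc G(g,\Lambda)$ is a Gabor frame when $\Theta_{g,g}^{\Lambda}\in\End_A(\mathscr{S}(\mb R))$ is invertible'' silently conflates invertibility of the frame operator on the Hilbert space $L^2(\mb R)$ (which is what the frame bounds give) with invertibility in the algebra of endomorphisms of the smooth module $\mathscr{S}(\mb R)$, and bridging that gap is exactly the spectral-invariance input you isolate in your last step (it is the Gr\"ochenig--Leinert-type result underlying \cite{L:Gabor}, which the paper absorbs into its citation). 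So: same key identity, different and somewhat more explicit bookkeeping of \emph{where} the inverse lives; no gap in your argument.
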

\begin{proof}
 Given an equivalence bimodule $\Xi$ between $A$ and  $B$, we denote by $\End_A(\Xi)$ the algebra of module endomorphisms with respect to the action of $A$ on $\Xi$. It is well known that the equivalence between $\End_A(\Xi)$ and $B$ via $b \mapsto \phi_b$ where $\phi_b(\xi)=\xi\cdot b$ for $\xi\in \Xi$. We note that $\mathcal{G}(g, \Lambda)$ is a Gabor frame when $\Theta_{g,g}^{\Lambda}\in \End_A(\mathscr{S}(\mathbb{R}))$ is invertible where $\Theta_{g,g}^{\Lambda}(f)=f\cdot \lb g,g \rb_B$ for $f\in  \mathscr{S}(\mathbb{R})$ since $\lb\Theta_{g,g}^{\Lambda}(f),f\rb=\sum_{\lambda}\lb f,\pi(\lambda)g \rb \lb \pi(\lambda)g, f\rb=\sum_{\lambda}|\lb f, \pi(\lambda)g\rb|^2$ for $f\in \mathscr{S}(\mathbb{R})$.  Thus  the invertibility of $\Theta_{g,g}^{\Lambda}$ implies that the invertibility of $\lb g, g\rb_B$, and vice versa. 
\end{proof}
\begin{thm}\cite[Theorem 3.3]{L:Gabor}
Let $\mathcal{G}(g, \Lambda)$ be a Gabor system on $L^2(\mathbb{R})$ with $g$ in $\mathscr{S}(\mathbb{R})$. Then the following conditions are equivalent.
\begin{item}
\item[(i)] $\mathcal{G}(g, \Lambda)$ is a tight Gabor frame for  $L^2(\mathbb{R})$.
\item[(ii)] $\mathcal{G}(g, \Lambda^{\circ})$ is an orthogonal system.
\item[(iii)]$ \lb g, g \rb_B= 1_B$.
\item[(iv)]$ \lb g, \pi(\lambda^{\circ})g\rb_{L^2(\mathbb{R})}=vol(\Lambda) \delta_{\lambda^{\circ}, 0}$ for all $\lambda^{\circ}\in \Lambda^{\circ}$.
\end{item}
\end{thm}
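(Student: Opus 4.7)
The plan is to reduce all four statements to a single ``fundamental identity of Gabor analysis'' that falls out of the associativity of the two Hilbert module inner products. Recall that for an equivalence bimodule one has ${}_A\langle f,g\rangle\cdot h = f\cdot\langle g,h\rangle_B$ for $f,g,h\in\mathscr{S}(\mathbb{R})$. Specialising $h=g$, pairing in $L^2$ with $f$, and inserting the explicit formulas for ${}_A\langle\,,\,\rangle$ and $\langle\,,\,\rangle_B$ quoted just before the theorem, I obtain
\begin{equation*}
\sum_{\lambda\in\Lambda}|\langle f,\pi(\lambda)g\rangle|^2 \;=\; \langle f\cdot\langle g,g\rangle_B,\,f\rangle_{L^2}.
\end{equation*}
This relation is the workhorse for (i)$\Leftrightarrow$(iii): if $\langle g,g\rangle_B = 1_B$ then the left-hand side equals $\|f\|_2^2$, giving Parseval tightness; conversely, tightness with bound $1$ gives $\langle f\cdot(\langle g,g\rangle_B - 1_B),f\rangle = 0$ for every $f\in\mathscr{S}(\mathbb{R})$, and polarisation together with the faithful right action of $B\cong\End_A(\mathscr{S}(\mathbb{R}))$ forces $\langle g,g\rangle_B = 1_B$.

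For (iii)$\Leftrightarrow$(iv) I exploit the explicit expansion
\begin{equation*}
\langle g,g\rangle_B \;=\; \sum_{\lambda^\circ \in \Lambda^\circ}\langle \pi(\lambda^\circ)g,g\rangle\,\pi^*(\lambda^\circ).
\end{equation*}
Since every element of the smooth torus $B$ admits a unique Fourier expansion in $\{\pi^*(\lambda^\circ)\}_{\lambda^\circ \in \Lambda^\circ}$ with rapidly decreasing coefficients, matching with $1_B$ determines the numbers $\langle \pi(\lambda^\circ)g,g\rangle$ uniquely; the $\mathrm{vol}(\Lambda)$ prefactor appearing in (iv) is forced by the normalisation $f\cdot\pi(\mathbf{b}) = \mathrm{vol}(\Lambda)^{-1}\sum b(\lambda^\circ)\pi^*(\lambda^\circ)f$ of the right action. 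Finally (ii)$\Leftrightarrow$(iv) is essentially a consequence of the canonical commutation relation (\ref{E:CCR}): the product $\pi(\lambda^\circ)^*\pi(\mu^\circ)$ equals a unimodular scalar multiple of $\pi(\mu^\circ - \lambda^\circ)$, so orthogonality of the family $\{\pi(\lambda^\circ)g\}_{\lambda^\circ \in \Lambda^\circ}$ in $L^2(\mathbb{R})$ is equivalent to the vanishing of $\langle g,\pi(\nu^\circ)g\rangle$ for every $\nu^\circ \neq 0$.

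The main technical annoyance I anticipate is bookkeeping of the $\mathrm{vol}(\Lambda)$ factor, which is what distinguishes the Parseval case from a merely tight frame and what fixes $\|g\|_2^2 = \mathrm{vol}(\Lambda)$ in (iv). Reading (ii) strictly as an orthogonality statement does not by itself pin down $\|g\|_2$, so the equivalence (ii)$\Leftrightarrow$(iv) is cleanest when chased through the cycle (i)$\Rightarrow$(iii)$\Rightarrow$(iv)$\Rightarrow$(ii)$\Rightarrow$(i), with the normalisation recovered at the (ii)$\Rightarrow$(i) step via the fundamental identity above. Beyond this, everything reduces to linear algebra in the smooth torus combined with the dictionary supplied by (\ref{E:CCR}).
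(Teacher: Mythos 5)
The paper itself gives no proof of this theorem: it is quoted verbatim, with citation, from Luef's article, and the text moves directly on to its consequences. There is therefore no in-paper argument to compare yours against, and I can only judge the proposal on its own terms; on those terms it is essentially the standard argument. Your ``fundamental identity'' $\sum_{\lambda}|\lb f,\pi(\lambda)g\rb|^2=\lb f\cdot\lb g,g\rb_B,\,f\rb_{L^2}$, obtained from the associativity ${}_A\lb f,g\rb\cdot g=f\cdot\lb g,g\rb_B$, is exactly the identity the paper does use in proving the \emph{preceding} theorem (Gabor frame $\Leftrightarrow$ invertibility of $\lb g,g\rb_B$), and it is the right engine for (i)$\Leftrightarrow$(iii); the Fourier-coefficient comparison for (iii)$\Leftrightarrow$(iv) and the reduction of (ii) to (iv) via the commutation relation are both sound.

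Two points deserve care. First, the normalisation: with the formula for $\lb\cdot,\cdot\rb_B$ as displayed in the paper (no $\mathrm{vol}(\Lambda)^{-1}$ prefactor), condition (iii) would force $\|g\|_2^2=1$ while (iv) forces $\|g\|_2^2=\mathrm{vol}(\Lambda)$; your choice to derive the normalisation of $\lb\cdot,\cdot\rb_B$ from associativity with the right action (which carries the $\mathrm{vol}(\Lambda)^{-1}$) rather than from the displayed formula is the correct reading and silently repairs an inconsistency in the paper. Second, as you yourself flag, (i) and (ii) are invariant under rescaling $g$ while (iii) and (iv) are not, so the four conditions are literally equivalent only if ``tight'' in (i) is read as ``Parseval'' (frame constant $1$); in Luef's original Theorem 3.3 the constant $\mathrm{vol}(\Lambda)^{-1}\|g\|_2^2$ appears explicitly in the analogues of (iii) and (iv), which resolves the mismatch. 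If you write the proof out in full you should either fix that constant or restate the theorem in the scale-invariant form. Beyond this bookkeeping, I see no gaps.
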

An important fact for us is that  if $\mathcal{G}(g, \Lambda)$ is a Gabor frame for $L^2(\mathbb{R})$, then $\{\pi(\lambda^{\circ})g \mid \lambda^{\circ} \in \Lambda^{\circ}\}$ is a Riesz basis for the closed linear span of the set $\{\pi(\lambda^{\circ})g \mid \lambda^{\circ} \in \Lambda^{\circ}\}$. Moreover, if we take $\widetilde{g}=g \lb g,g\rb_B^{-1/2}$, then $\mathcal{G}(\widetilde{g}, \Lambda)$ becomes a tight Gabor frame.  We interpret  Wexler-Raz duality in Gabor analysis in terms of module relations as it appeared in \cite{DLF:Sigma} without a proof.   
\begin{thm}
\begin{equation}\label{E:Wexler-Raz}
f=\widetilde{g}\lb \widetilde{g}, f \rb_B
\end{equation} for $f\in \mathscr{S}(\mathbb{R})$.
\end{thm}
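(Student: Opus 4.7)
The plan is to reduce the identity, via bimodule associativity, to a form where the tight frame normalization $\lb \widetilde g, \widetilde g\rb_B = 1_B$ applies directly. The compatibility relation $u \cdot \lb v, w\rb_B = {}_A\lb u, v\rb \cdot w$ of the strong Morita equivalence, applied with $u = v = \widetilde g$ and $w = f$, yields
\[
\widetilde g \cdot \lb \widetilde g, f\rb_B = {}_A\lb \widetilde g, \widetilde g\rb \cdot f,
\]
so the theorem reduces to showing that the Rieffel-type projection ${}_A\lb \widetilde g, \widetilde g\rb$ acts as the identity on $f$.

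For any $f = \widetilde g \cdot b$ lying in the $B$-submodule generated by $\widetilde g$, this is an immediate consequence of $\lb \widetilde g, \widetilde g\rb_B = 1_B$, which in turn follows from the definition $\widetilde g = g\lb g,g\rb_B^{-1/2}$ together with the equivalence (i)$\Leftrightarrow$(iii) in the preceding theorem:
\[
\widetilde g \cdot \lb \widetilde g, \widetilde g \cdot b\rb_B = \widetilde g \cdot (\lb \widetilde g, \widetilde g\rb_B \cdot b) = \widetilde g \cdot b = f.
\]
For general Schwartz $f$, I would expand the right-hand side using the explicit formulas as
\[
{}_A\lb \widetilde g, \widetilde g\rb \cdot f = \sum_{\lambda}\lb \widetilde g, \pi(\lambda)\widetilde g\rb\, \pi(\lambda) f,
\]
and then apply the fundamental identity of Gabor analysis (which, in the present language, is nothing but the same associativity relation read from the dual side) to rewrite this as $vol(\Lambda)^{-1}\sum_{\lambda^\circ}\lb \pi(\lambda^\circ) f, \widetilde g\rb\, \pi^*(\lambda^\circ)\widetilde g$. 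The orthogonality of $\{\pi(\lambda^\circ)\widetilde g\}_{\lambda^\circ \in \Lambda^\circ}$ from part (ii) of the preceding theorem, together with the normalization $\|\widetilde g\|_2^2 = vol(\Lambda)$ that drops out of (iv), then identifies this sum with the expansion of $f$ against the orthogonal system $\{\pi(\lambda^\circ)\widetilde g\}$.

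The delicate step is this last identification: the expansion in $\{\pi(\lambda^\circ)\widetilde g\}$ \emph{a priori} reconstructs only the orthogonal projection of $f$ onto the closed linear span of this system, so the main obstacle will be a careful bookkeeping of normalizations and volume factors to confirm that, in the Hilbert $C^*$-bimodule setting considered here, this projection acts as the identity on the bimodule element $f$. This is essentially the content of the Wexler--Raz biorthogonality translated into the module language, and pinning down the translation cleanly is where the real work lies.
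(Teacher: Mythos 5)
Your opening reduction is correct and is in fact sharper than anything in the paper's own argument: by associativity, $\widetilde g\cdot\lb\widetilde g,f\rb_B={}_A\lb\widetilde g,\widetilde g\rb\cdot f$, so the theorem is exactly the assertion that the Rieffel-type projection $p_{\widetilde g}={}_A\lb\widetilde g,\widetilde g\rb$ acts as the identity on all of $\mathscr{S}(\mathbb{R})$, equivalently that every Schwartz function lies in the closed span $V=\overline{\operatorname{span}}\{\pi(\lambda^\circ)\widetilde g\mid\lambda^\circ\in\Lambda^\circ\}$ (your Janssen-type rewriting is, as you note yourself, just this same associativity read backwards, so it adds nothing). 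But you then stop and label precisely this identification as ``bookkeeping of normalizations and volume factors'' to be carried out later. That is the entire content of the statement, not a normalization check, so the proposal does not constitute a proof. Moreover, your own reduction shows the gap cannot be closed by bookkeeping in the stated generality: $p_{\widetilde g}$ is the orthogonal projection of $L^2(\mathbb{R})$ onto $V$, and the paper records immediately before the theorem that $\{\pi(\lambda^\circ)\widetilde g\}$ is only a Riesz basis \emph{for its closed linear span}; for $vol(\Lambda)=\theta<1$ the adjoint lattice has covolume $1/\theta>1$, so $V$ is a proper subspace and $p_{\widetilde g}\neq 1_A$ (it is a nontrivial Rieffel projection). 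Hence $p_{\widetilde g}\cdot f=f$ can only hold for $f$ in the singly generated submodule $\widetilde g\cdot B$, which is where your second display (the case $f=\widetilde g\cdot b$) lives, and that special case is the only part you actually establish.

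For comparison, the paper's proof takes the step you defer and asserts it outright: citing the self-duality of the system and \cite[Theorem 1.2.2]{FS}, it writes $\lb f,h\rb=\sum_{\lambda^\circ}\lb f,\pi(\lambda^\circ)\widetilde g\rb\lb\pi(\lambda^\circ)\widetilde g,h\rb$ for all $f,h\in L^2(\mathbb{R})$ (i.e.\ completeness of the orthogonal system over $\Lambda^\circ$), and then unwinds the cocycle to identify the sum with $\lb\widetilde g\cdot\lb\widetilde g,f\rb_B,h\rb$. So the difficulty you flag as ``where the real work lies'' is exactly the point the paper absorbs into its first display; you have neither reproduced that step nor replaced it with something else. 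To make your argument complete you would have to either justify that completeness claim or restrict the statement to $f\in\widetilde g\cdot B$ (which is what the subsequent corollary actually needs, provided one separately verifies $\overline{\nabla}\eta\in\eta\cdot B$).
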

\begin{proof}
Since the system $\{\pi(\lambda^{\circ})\widetilde{g}\mid \lambda^{\circ}\in \Lambda^{\circ}\}$ is dual to itself, \cite[Theorem 1.2.2 p.40]{FS} implies that
\[ \lb f, h\rb=\sum_{\lambda^{\circ}} \lb f, \pi(\lambda^{\circ})\widetilde{g} \rb \lb \pi(\lambda^{\circ})\widetilde{g}, h\rb, \quad f,h \in L^2(\mathbb{R}). \]
Since $\pi(\lambda^{\circ})^*=\overline{c(\lambda^{\circ}, \lambda^{\circ})}\pi(-\lambda^{\circ})$ and $|c(\lambda^{\circ}, \lambda^{\circ})|^2=1$,
\[ \begin{split}
\sum_{\lambda^{\circ}} \lb f, \pi(\lambda^{\circ})\widetilde{g} \rb \lb \pi(\lambda^{\circ})\widetilde{g},h\rb&=\sum_{\lambda^{\circ}} |c(\lambda^{\circ}, \lambda^{\circ})|^2\lb f, \pi(-\lambda^{\circ})\widetilde{g} \rb \lb \pi(-\lambda^{\circ})\widetilde{g},h\rb\\
&=\sum_{\lambda^{\circ}} \lb f, \overline{c(\lambda^{\circ},\lambda^{\circ})}\pi(-\lambda^{\circ})\widetilde{g} \rb \lb \overline{c(\lambda^{\circ},\lambda^{\circ})}\pi(-\lambda^{\circ})\widetilde{g},h\rb\\
&=\sum_{\lambda^{\circ}} \lb f, \pi^*(\lambda^{\circ})\widetilde{g} \rb \lb \pi^*(\lambda^{\circ})\widetilde{g},h\rb\\
&=\sum_{\lambda^{\circ}}\lb \pi(\lambda^{\circ})f, \widetilde{g}\rb \lb \widetilde{g}, \pi(\lambda^{\circ})h \rb\\
&=\lb \sum_{\lambda^{\circ}} \lb \pi(\lambda^{\circ})f, \widetilde{g} \rb\pi^*(\lambda^{\circ})\widetilde{g}, h \rb\\
&=\lb \widetilde{g}\cdot\lb \widetilde{g}, f \rb_B, h\rb.  
\end{split}\]
\end{proof}
Recently a large class of functions are proven to be Gabor frames \cite{GL:Gabor}. Recall that a function $\eta$ is \emph{totally positive} if for every two sets of increasing real numbers $x_1< \cdots < x_N$ and $y_1 < \cdots < y_N$ the determinant of the matrix $ \left(\eta(x_j - y_k)_{1\le j,k \le N} \right)$ is non-negative. A totally positive function $\eta$ is of \emph{finite type} $M$, $M\in \mathbb{N}$ with $M\ge 2$, if its Laplace transform $\widehat{\eta}$ has the form:
\[ \widehat{\eta}(\omega)=e^{-\delta\omega^2}e^{-\delta_0\omega}\prod_{j=1}^{M} \frac{1}{1+2\pi i \delta_j \omega} \] for real non-zero parameters $\delta_j, \delta >0$.
\begin{cor}\cite[Lemma 6.2]{DLF:Sigma}
Let $\eta$ be a totally positive function of finite type greater than 2. Then $\eta$ is a standard module frame for $ \mathscr{S}(\mathbb{R})$. Passing to a Parseval frame $\tilde{\eta}=\eta \cdot \lb \eta, \eta \rb_B^{-1/2}$, $p_{\tilde{\eta}}={}_{A}\lb \tilde{\eta}, \tilde{\eta} \rb $ is a Rieffel-type projection in $A$ for $0<\theta<1$.
\end{cor}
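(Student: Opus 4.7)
The plan is to reduce everything to the sharp characterization of Gabor frames generated by totally positive functions and then invoke the machinery already assembled in the excerpt. The main external input is the Gröchenig--Lyubarskii theorem (cited as GL:Gabor) which states that for a totally positive function $\eta$ of finite type $M \geq 2$, the Gabor system $\mathcal{G}(\eta, \Lambda)$ is a frame for $L^2(\mathbb{R})$ precisely when $\operatorname{vol}(\Lambda) < 1$. In our setting $\Lambda = \theta \mathbb{Z} \times \mathbb{Z}$ has volume $\theta$, so the hypothesis $0 < \theta < 1$ supplies exactly the frame condition.

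First I would verify that $\eta \in \mathscr{S}(\mathbb{R})$: the explicit form of $\widehat{\eta}$ (a Gaussian times an exponential times a product of rational factors) makes $\widehat{\eta}$ a Schwartz function, and hence so is $\eta$; this places $\eta$ in the equivalence bimodule appearing in the earlier theorem. Next, applying the theorem that characterizes Gabor frames via module inner products, the frame property of $\mathcal{G}(\eta, \Lambda)$ is equivalent to invertibility of $\lb \eta, \eta \rb_B$ in $B$. Combining this with Gröchenig--Lyubarskii then yields invertibility of $\lb \eta, \eta \rb_B$ for all $\theta \in (0,1)$.

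At this point Proposition \ref{P:standard} applies verbatim: invertibility of $\lb \eta, \eta \rb_B$ forces $\{\eta\}$ to be a standard module frame for $\mathscr{S}(\mathbb{R})$. To pass to a Parseval frame, I would use that $B = \mathcal{A}^{\infty}(\Lambda^{\circ}, \bar{c})$ is a pre-$C^*$-algebra closed under holomorphic functional calculus; therefore $\lb \eta, \eta \rb_B^{-1/2}$ is a well-defined positive element of $B$. Setting $\tilde{\eta} = \eta \cdot \lb \eta, \eta \rb_B^{-1/2}$ and using the $B$-compatibility rule $\lb f, g \cdot b \rb_B = \lb f, g \rb_B \cdot b$ together with its adjoint version, one computes
\[
\lb \tilde{\eta}, \tilde{\eta} \rb_B = \lb \eta, \eta \rb_B^{-1/2} \, \lb \eta, \eta \rb_B \, \lb \eta, \eta \rb_B^{-1/2} = 1_B.
\]
Therefore $p_{\tilde{\eta}} = {}_A \lb \tilde{\eta}, \tilde{\eta} \rb$ is idempotent and self-adjoint in $A$, i.e.\ a Rieffel-type projection.

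The only genuine obstacle is the Gröchenig--Lyubarskii criterion itself, which is a deep time-frequency result; everything past that is formal manipulation inside the bimodule $\mathscr{S}(\mathbb{R})$ and the smooth noncommutative tori $A, B$. The role of the type-$M$ hypothesis is \emph{solely} to trigger this criterion; once invertibility of $\lb \eta, \eta \rb_B$ is known, the construction of $p_{\tilde{\eta}}$ proceeds exactly as in the Rieffel and Boca cases recalled in Section \ref{S:Gabor}.
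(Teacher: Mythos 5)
Your proposal is correct and follows precisely the route the paper intends: the corollary is stated there without proof (it is imported from \cite{DLF:Sigma}), and the surrounding text sets up exactly your chain --- the Gr\"ochenig--Lyubarskii criterion gives the frame property since $\Lambda=\theta\mathbb{Z}\times\mathbb{Z}$ has volume $\theta<1$, the preceding theorem converts this into invertibility of $\lb \eta,\eta\rb_B$, Proposition \ref{P:standard} then yields a standard module frame, and the normalization $\tilde{\eta}=\eta\cdot\lb\eta,\eta\rb_B^{-1/2}$ produces the projection. Your two supporting checks (that $\eta\in\mathscr{S}(\mathbb{R})$ under the paper's definition with $\delta>0$, and that $\lb\eta,\eta\rb_B^{-1/2}$ exists in $B$ by holomorphic functional calculus) are exactly the points that need, and receive, justification.
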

We remark that such examples include  Gaussians, and hyperbolic secants. \\

\section{A group action on solitons}\label{S:Solitons}

A commutative torus $\mathbb{T}^2$-action $\alpha$ is defined on $A_{\theta}$ by
\[\alpha_{(z_1, z_2)}(U_1)=z_1U_1, \alpha_{(z_1, z_2)}(U_2)=z_2U_2 \quad \text{for} \,(z_1,z_2)\in \mathbb{T}^2.\]

We denote by $\partial_1$ and $\partial_2$ the infinitesimal generators of each factor of $\mathbb{T}^2$ under $\alpha$ \cite{CO:NCG}.  These are unbounded derivations on $A_{\theta}$, but well defined on $\mc{A_{\theta}}$. For $\nu, \mu =1,2$
\[ \partial_{\nu}(U_{\mu})=2\pi i \delta_{\nu, \mu} U_{\mu}. \]

Similarly, we have such derivations on $B$, and use same notations without confusion.

Equipped with a $B$-valued hermitian structure $\lb \,, \, \rb_B $ on $\Xi=\mathscr{S}(\mathbb{R})$, we  can lift derivations to covariant derivatives $\nabla_1, \nabla_2$ on $\Xi$ given by
\[ (\nabla_1 \xi)(t)=\frac{2\pi i t}{\theta} \xi(t) \quad \text{and} \quad (\nabla_2 \xi)(t)= \xi ' (t). \]
Then as proved in \cite {CR:YM} we have the (right) Leibnitz rule for both covariant derivatives:
\begin{equation}\label{E:Leibniz}
\nabla_{\nu}(\xi\cdot b)=(\nabla_{\nu}\xi)\cdot b+ \xi\cdot (\partial_{\nu} b)  \quad \text{for $\nu =1,2 $,}
\end{equation}
and compatibility with the hermitian structure:
\begin{equation}\label{E:compatibility}
\partial_{\nu}(\lb \xi_1, \xi_2 \rb_B)=\lb \nabla_{\nu}\xi_1, \xi_2 \rb_B+ \lb \xi_1, \nabla_{\nu}\xi_2 \rb_B \quad \text{for $\nu =1,2 $.}
\end{equation}
We introduce complex derivations $\overline{\partial}= \partial_1+ i \partial_2$ and $\partial=\partial_1- i \partial_2$. Accordingly, we introduce the anti-holomorphic connection $\overline{\nabla}=\nabla_1 + i  \nabla_2$ and the holomorphic one $\nabla=\partial_1-i\partial_2$. Then using linearity (\ref{E:Leibniz}) and (\ref{E:compatibility}) hold for $\overline{\nabla}(\nabla)$ and $\overline{\partial}(\partial)$ respectively.
\begin{lem}\label{L:Killing}
Let $p$ be a projection in $\mc{A_{\theta}}$. Then $p(\partial_{\nu}p)p=0$.
\end{lem}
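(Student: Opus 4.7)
The plan is to exploit the projection identity $p^2=p$ together with the fact that $\partial_\nu$ is a derivation on $\mc{A_\theta}$. This kind of ``sandwich'' identity is a standard elementary computation and the only obstacle is a careful bookkeeping of the Leibniz rule.

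First I would differentiate the idempotent relation $p^2=p$. Since $\partial_\nu$ is a derivation, this gives
\begin{equation*}
(\partial_\nu p)\,p + p\,(\partial_\nu p) \;=\; \partial_\nu p.
\end{equation*}
Next I would multiply this identity on the left by $p$ and on the right by $p$. Using $p^2=p$ repeatedly, the left-hand side becomes
\begin{equation*}
p(\partial_\nu p)p\cdot p + p\cdot p(\partial_\nu p)p \;=\; 2\,p(\partial_\nu p)p,
\end{equation*}
while the right-hand side is simply $p(\partial_\nu p)p$. Comparing the two expressions yields $2\,p(\partial_\nu p)p = p(\partial_\nu p)p$, and therefore $p(\partial_\nu p)p=0$.

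There is really no hard part here: the only thing to be careful about is that $\partial_\nu$, although a priori unbounded on $A_\theta$, acts as a genuine derivation on the smooth subalgebra $\mc{A_\theta}$ where $p$ lives, so the Leibniz rule is available without any domain issues. The same computation would in fact work for any (possibly unbounded) derivation defined on a $*$-subalgebra containing $p$.
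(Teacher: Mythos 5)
Your proof is correct and is essentially identical to the paper's: both differentiate $p^2=p$ via the Leibniz rule and then multiply the resulting identity by $p$ on both sides to conclude $2\,p(\partial_\nu p)p = p(\partial_\nu p)p$. Your added remark about $\partial_\nu$ being a genuine derivation on the smooth subalgebra $\mc{A_\theta}$ is a reasonable (and harmless) point of care that the paper leaves implicit.
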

\begin{proof}
Note that
\[ \partial_{\nu}(p)=(\partial_{\nu}p)p +p(\partial_{\nu}p).\]
By multiplying $p$ both sides, the conclusion follows.
\end{proof}

\begin{lem}\label{L:Action}
$\displaystyle \Tr(\partial_{\nu}p \partial_{\nu}p)=2 \Tr(p\partial_{\nu}p \partial_{\nu}p)$
\end{lem}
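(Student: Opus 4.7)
The plan is to exploit the idempotent identity $p^2 = p$ and the cyclic property of the trace $\Tr$. Differentiating $p^2 = p$ with the Leibniz rule for $\partial_{\nu}$ yields
\[
\partial_{\nu} p \;=\; (\partial_{\nu} p)\, p + p\, (\partial_{\nu} p),
\]
which is the same relation used in the proof of Lemma \ref{L:Killing}. I would then right-multiply this identity by $\partial_{\nu} p$ to express $(\partial_{\nu} p)^{2}$ as a sum of two terms, one with $p$ in front and one with $p$ in the middle:
\[
(\partial_{\nu} p)^{2} \;=\; p\,(\partial_{\nu} p)(\partial_{\nu} p) + (\partial_{\nu} p)\, p\,(\partial_{\nu} p).
\]

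Next I would apply $\Tr$ to both sides, splitting the right-hand side into two traces. The key observation is that by the cyclic property of $\Tr$,
\[
\Tr\bigl((\partial_{\nu} p)\, p\,(\partial_{\nu} p)\bigr) \;=\; \Tr\bigl(p\,(\partial_{\nu} p)(\partial_{\nu} p)\bigr),
\]
simply by moving the leading $\partial_{\nu} p$ to the rear. Combining these two equal contributions gives the claimed identity $\Tr(\partial_{\nu} p\,\partial_{\nu} p) = 2\,\Tr(p\,\partial_{\nu} p\,\partial_{\nu} p)$.

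There is no substantive obstacle here; the argument is just Leibniz plus trace cyclicity. The only thing to be careful about is that $\partial_{\nu}$ is unbounded on $A_{\theta}$ but everything in sight lives in the smooth subalgebra $\mc{A_{\theta}}$, where $\partial_{\nu}$ is defined, the Leibniz rule holds, and the trace $\Tr$ is finite on products of such elements, so all manipulations are justified. An alternative (equivalent) route would be to derive the companion identity $p(\partial_{\nu} p) p = 0$ from Lemma \ref{L:Killing} first and use it to simplify $\Tr(p(\partial_{\nu} p)(\partial_{\nu} p))$, but the Leibniz-and-cyclicity approach above is the most direct.
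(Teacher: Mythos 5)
Your proof is correct, and it is slightly more economical than the paper's. The paper substitutes the Leibniz identity $\partial_{\nu}p=(\partial_{\nu}p)p+p(\partial_{\nu}p)$ into \emph{both} factors of $\Tr(\partial_{\nu}p\,\partial_{\nu}p)$, producing four terms, two of which must then be killed using Lemma \ref{L:Killing} (i.e.\ $p(\partial_{\nu}p)p=0$) before cyclicity finishes the job. You substitute into only one factor, getting the two-term identity
\[
(\partial_{\nu}p)(\partial_{\nu}p)=(\partial_{\nu}p)\,p\,(\partial_{\nu}p)+p\,(\partial_{\nu}p)(\partial_{\nu}p),
\]
after which a single application of trace cyclicity identifies the two traces and yields the factor of $2$. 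This bypasses Lemma \ref{L:Killing} entirely, which is a genuine (if small) simplification; the paper's version has the mild advantage of reusing the lemma it just proved and of keeping the expression manifestly symmetric in the two factors, but nothing is lost in your route. Your remark about working in the smooth subalgebra $\mc{A_{\theta}}$, where $\partial_{\nu}$ and $\Tr$ are both defined, is the right caveat and matches the paper's implicit setting.
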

\begin{proof}
\[
\begin{split}
\Tr(\partial_{\nu}p \partial_{\nu}p)=& \Tr( [(\partial_{\nu}p)p +p(\partial_{\nu}p)][(\partial_{\nu}p)p +p(\partial_{\nu}p)])\\
=& \Tr((\partial_{\nu}p)p(\partial_{\nu}p)p+p(\partial_{\nu}p \partial_{\nu}p) p+(\partial_{\nu}p)p(\partial_{\nu}p)+ (\partial_{\nu}p) p (\partial_{\nu}p)  p)\\
=&2 \Tr(p\partial_{\nu}p \partial_{\nu}p)
\end{split}
\]
using Lemma \ref{L:Killing} and cyclicity of $\Tr$.
\end{proof}
Let us recall a characterization of the minimizing solitons  from \cite{DKL:Sigma,DKL:Sigma2}; let $Q(p)$  be the topological charge or the first Chern number defined by
\[ \frac{1}{2\pi i}\Tr (p[\partial_1 p \partial_2 p-\partial_2 p\partial_1 p]), \] which is an integer \cite{CO:NCG}.
Since $\displaystyle \Tr ((\overline{\partial}(p)p)^*(\overline{\partial}(p)p))\geq 0$ or $\displaystyle \Tr((\partial(p)p)^* (\partial(p)p)) \ge 0$, we have,  combining (\ref{E:Action}) with Lemma \ref{L:Action},
\[S(p) \ge \pm 4\pi Q(p)\] where the equality occurs exactly when the self duality equation\begin{equation*}
\overline{\partial}(p)p=0
\end{equation*}  holds or the anti-self duality equation  \begin{equation*}
\partial(p)p=0
\end{equation*} holds since $\Tr$ is faithful.\\

An important result of \cite{DKL:Sigma} is the following observation which lifts a self-duality equation to a linear equation involving the anti-holomorphic connection on the module.
\begin{thm}\cite[Section 5.3]{DKL:Sigma2}\label{T:DLL}
Let $\xi$ be a standard Parseval frame or $\lb \xi, \xi \rb_B=1_B$. Let $p_{\xi}={}_{A}\lb \xi, \xi \rb$ be a projection. Then
\[(\overline{\partial}p_{\xi}) p_{\xi}=0  \quad \text{if and only if} \quad \overline{\nabla}\xi=\xi \cdot b  \quad   \text{for some $b \in B $}\]
\end{thm}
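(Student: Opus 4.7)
My strategy is to translate the $A$-side condition $(\overline{\partial} p_{\xi}) p_{\xi}=0$ into a single module identity for $\overline{\nabla}\xi$ and then solve that identity. The Morita axiom ${}_A\lb f,g\rb\cdot h=f\cdot \lb g,h\rb_B$ together with Parseval gives $p_{\xi}\cdot\xi=\xi$, and more generally $p_{\xi}\cdot f=\xi\cdot \lb \xi,f\rb_B$ for every $f\in\mathscr{S}(\mathbb{R})$, so that
\[
(\overline{\partial}p_{\xi})p_{\xi}\cdot f=\bigl((\overline{\partial}p_{\xi})\cdot\xi\bigr)\cdot\lb \xi,f\rb_B.
\]
Because $\mathscr{S}(\mathbb{R})$ is a faithful equivalence $A$-bimodule, $(\overline{\partial}p_{\xi})p_{\xi}=0$ is therefore equivalent to the single module relation $(\overline{\partial}p_{\xi})\cdot\xi=0$.

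The computational heart is a Leibniz rule for the $A$-valued inner product analogous to (\ref{E:compatibility}): $\partial_{\nu}{}_A\lb \eta_1,\eta_2\rb={}_A\lb \nabla_{\nu}\eta_1,\eta_2\rb+{}_A\lb \eta_1,\nabla_{\nu}\eta_2\rb$. Combining $\nu=1,2$ while tracking that ${}_A\lb\cdot,\cdot\rb$ is linear in the first slot and conjugate-linear in the second yields
\[
\overline{\partial}p_{\xi}={}_A\lb \overline{\nabla}\xi,\xi\rb+{}_A\lb \xi,\nabla\xi\rb.
\]
Applying the Morita axiom and Parseval then produces the key formula $(\overline{\partial}p_{\xi})\cdot\xi=\overline{\nabla}\xi+\xi\cdot\lb \nabla\xi,\xi\rb_B$.

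From here both directions are immediate. For the forward implication, the vanishing of $(\overline{\partial}p_{\xi})\cdot\xi$ identifies $b:=-\lb \nabla\xi,\xi\rb_B\in B$ with $\overline{\nabla}\xi=\xi\cdot b$. For the converse, assuming $\overline{\nabla}\xi=\xi\cdot b$, I differentiate $\lb \xi,\xi\rb_B=1_B$ by $\overline{\partial}$ and apply (\ref{E:compatibility}): the identity $\lb \nabla\xi,\xi\rb_B+\lb \xi,\overline{\nabla}\xi\rb_B=0$ together with $\lb \xi,\xi\rb_B=1_B$ forces $\lb \nabla\xi,\xi\rb_B=-b$, so the key formula collapses to $(\overline{\partial}p_{\xi})\cdot\xi=\xi\cdot b-\xi\cdot b=0$.

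The main piece of care, and the only real obstacle, is the sign and linearity bookkeeping: because ${}_A\lb\cdot,\cdot\rb$ and $\lb\cdot,\cdot\rb_B$ have opposite linearity conventions in their two slots, the passage from $\partial_1\pm i\partial_2$ through the Leibniz rule to the holomorphic/antiholomorphic connections has to be done carefully so that $\overline{\nabla}$ lands in one slot and $\nabla$ in the other. Once this bookkeeping is set, the proof is a short chain of Morita manipulations combined with the Parseval identity.
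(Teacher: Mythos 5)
Your proof is correct. Note first that the paper does not actually prove Theorem \ref{T:DLL} itself (it is cited from \cite{DKL:Sigma2}); the in-paper argument to compare against is the proof of the generalization, Theorem \ref{T:Soliton}, specialized to $\lb \xi,\xi\rb_B=1_B$. The two computations use the same ingredients (the Leibniz/compatibility rules for both inner products and the Morita axiom ${}_A\lb f,g\rb\cdot h=f\cdot\lb g,h\rb_B$), but they are organized differently. The paper establishes the operator identity $\overline{\partial}(p)\,p=(1-p)\,{}_A\lb \overline{\nabla}\xi,\xi\rb$ in $A$, verifies the forward implication by an algebraic cancellation in $A$, and only descends to the module for the converse by applying $(1-p)\,{}_A\lb\overline{\nabla}\xi,\xi\rb$ to $\xi$. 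You instead descend to the module at the outset, reducing $(\overline{\partial}p_\xi)p_\xi=0$ to the vector identity $(\overline{\partial}p_\xi)\cdot\xi=0$ and then to the single formula $(\overline{\partial}p_\xi)\cdot\xi=\overline{\nabla}\xi+\xi\cdot\lb\nabla\xi,\xi\rb_B$, from which both implications are read off symmetrically. The price of your route is the extra input that the left $A$-action on $\mathscr{S}(\mathbb{R})$ is faithful (needed for the implication $(\overline{\partial}p_\xi)\cdot\xi=0\Rightarrow(\overline{\partial}p_\xi)p_\xi=0$); this is standard for an equivalence bimodule, and in any case $A_\theta$ is simple for irrational $\theta$, but you should state it as a lemma rather than an aside. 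What you gain is a cleaner identification of $b=-\lb\nabla\xi,\xi\rb_B$, which by your differentiated Parseval identity $\lb\nabla\xi,\xi\rb_B+\lb\xi,\overline{\nabla}\xi\rb_B=0$ agrees with the value $b=\lb\xi,\overline{\nabla}\xi\rb_B$ recorded in the paper's remark; your explicit attention to which of $\nabla,\overline{\nabla}$ lands in the linear versus conjugate-linear slot is exactly the point the paper glosses over with the phrase ``using linearity.''
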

\begin{rem}
In fact, if $\overline{\nabla}\xi=\xi \cdot b$ holds, then $b$ must be $\lb \xi, \overline{\nabla}\xi \rb_B$.
\end{rem}
Whenever we have a standard module frame $\eta$, then by passing from a standard module frame to a Parseval one $\tilde{\eta}$ one gets a noncommutative soliton.  With Theorem \ref{T:Trace} in mind,  we need a slightly stronger form of a linear equation of $\eta$ than $\tilde{\eta}$. From now on, we write $A$ instead of $\mc{A_{\theta}}$ for the subscript in the operator valued inner product.   

\begin{thm}\label{T:Soliton}
 Suppose that $\langle \eta, \eta \rangle_B$ is invertible. Then let $\tilde{\eta}=\eta \langle \eta, \eta \rangle_B ^{-1/2}$ and $p_{\tilde{\eta}}=_A\lb \tilde{\eta}, \tilde{\eta} \rb$.  Then
\[(\overline{\partial}p_{\tilde{\eta}}) p_{\tilde{\eta}}=0  \quad \text{if and only if} \quad \overline{\nabla}\eta=\eta \cdot b  \quad   \text{for some $b \in B $} \]
\end{thm}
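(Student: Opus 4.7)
The plan is to reduce Theorem \ref{T:Soliton} to Theorem \ref{T:DLL} by passing between $\eta$ and its Parseval normalization $\tilde\eta$ via the right Leibniz rule (\ref{E:Leibniz}). Write $c:=\langle \eta,\eta\rangle_B$, which by assumption is a positive invertible element of $B$. Because $B=\mathcal{A}^{\infty}(\Lambda^{\circ},\bar c)$ is closed under the holomorphic functional calculus, both $c^{1/2}$ and $c^{-1/2}$ lie in $B$, so that $\tilde\eta=\eta\cdot c^{-1/2}$ and $\eta=\tilde\eta\cdot c^{1/2}$ are both honest module expressions; in particular $\bar\partial(c^{\pm 1/2})\in B$. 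This is the only non-cosmetic input beyond Theorem \ref{T:DLL}, and it is really what makes the two conditions interchangeable.

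Next, since $\langle \tilde\eta,\tilde\eta\rangle_B=c^{-1/2}cc^{-1/2}=1_B$, Theorem \ref{T:DLL} applies to $\tilde\eta$ and says that $(\bar\partial p_{\tilde\eta})p_{\tilde\eta}=0$ if and only if $\bar\nabla\tilde\eta=\tilde\eta\cdot \tilde b$ for some $\tilde b\in B$. So it suffices to prove the equivalence
\[
\bar\nabla\tilde\eta=\tilde\eta\cdot\tilde b\ \text{for some }\tilde b\in B \quad\Longleftrightarrow\quad \bar\nabla\eta=\eta\cdot b\ \text{for some }b\in B.
\]
For the ($\Leftarrow$) direction, assuming $\bar\nabla\eta=\eta\cdot b$, I apply $\bar\nabla$ to $\tilde\eta=\eta\cdot c^{-1/2}$ using (\ref{E:Leibniz}) to get
\[
\bar\nabla\tilde\eta=(\bar\nabla\eta)\cdot c^{-1/2}+\eta\cdot\bar\partial(c^{-1/2})=\tilde\eta\cdot\bigl(c^{1/2}bc^{-1/2}+c^{1/2}\bar\partial(c^{-1/2})\bigr),
\]
after substituting $\eta=\tilde\eta\cdot c^{1/2}$. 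The bracketed element lies in $B$, so this is the desired $\tilde b$. For the ($\Rightarrow$) direction, the same calculation with the roles of $\eta$ and $\tilde\eta$ swapped gives
\[
\bar\nabla\eta=(\bar\nabla\tilde\eta)\cdot c^{1/2}+\tilde\eta\cdot\bar\partial(c^{1/2})=\eta\cdot\bigl(c^{-1/2}\tilde bc^{1/2}+c^{-1/2}\bar\partial(c^{1/2})\bigr),
\]
again with the bracketed element in $B$.

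Chaining these with Theorem \ref{T:DLL} yields the statement of Theorem \ref{T:Soliton}. The argument is largely mechanical; the only point that might require a sentence of justification is the presence of $c^{\pm 1/2}$ inside the smooth algebra $B$, which is the one place the assumption that we are working with the smooth noncommutative torus (rather than the $C^{*}$-algebra) is used. I do not foresee a genuine obstacle: the substance of the theorem sits in Theorem \ref{T:DLL}, and the rest is a change-of-frame computation built from the Leibniz rule.
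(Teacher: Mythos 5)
Your argument is correct, and in fact it is exactly the route the paper itself flags in its first sentence (``the easier way is to show the equivalence between $\overline{\nabla}\tilde{\eta}=\tilde{\eta}\cdot b'$ \dots and $\overline{\nabla}\eta=\eta\cdot b$'') before declining to carry it out and giving a direct computation instead. The two approaches differ in substance. You reduce everything to Theorem \ref{T:DLL} by conjugating with $c^{\pm 1/2}$, $c=\lb\eta,\eta\rb_B$, which requires the holomorphic functional calculus in the smooth algebra $B$ to place $c^{\pm1/2}$ (and hence $\overline{\partial}(c^{\pm1/2})$) in $B$ — you correctly isolate this as the one non-formal input, and your two Leibniz-rule computations of $\tilde b$ and $b$ check out, as does $\lb\tilde\eta,\tilde\eta\rb_B=c^{-1/2}cc^{-1/2}=1_B$. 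The paper instead works directly with $p_{\tilde\eta}={}_A\lb\eta\cdot c^{-1},\eta\rb$, never invoking the square root: it establishes the identity $\partial_\nu(p)p=(1-p)\,{}_A\lb\nabla_\nu\eta,\eta\cdot c^{-1}\rb$ and then proves both implications from it. What the paper's route buys is (i) independence from Theorem \ref{T:DLL}, which is quoted from \cite{DKL:Sigma2} without proof, so Theorem \ref{T:Soliton} gets a self-contained argument; and (ii) the explicit formula $b=\lb\eta,\eta\rb_B^{-1}\lb\eta,\overline{\nabla}\eta\rb_B$, which the subsequent Remark and later results rely on. What your route buys is brevity and transparency: it makes clear that the theorem is a change-of-frame corollary of the Parseval case. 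If you adopt your version, you should still record the explicit form of $b$ (it follows from your $\Leftarrow$ computation combined with the Remark after Theorem \ref{T:DLL}), since it is used downstream.
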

\begin{proof}
In view of Theorem \ref{T:DLL}, the easier way is to show that the equivalence between $\overline{\nabla}\tilde{\eta}=\tilde{\eta}\cdot b'$ for some $b' \in B$ and $\overline{\nabla}\eta=\eta \cdot b $  for some $b\in B$.\\
 In the following we give a direct proof.
For this, it is better to view $p_{\tilde{\eta}}$ as ${}_{A} \lb \eta \cdot \lb \eta, \eta \rb_B^{-1}, \eta \rb $ or ${}_A\lb \eta, \eta\cdot \lb \eta, \eta \rb_B^{-1} \rb$. We abbreviate $p_{\tilde{\eta}}$ as $p$ without confusion. First note the following cancellation property; \begin{equation}\label{E:cancel}
{}_{A}\lb \zeta, \eta\rb p={}_{A}\lb \zeta, \eta\rb \quad \forall \zeta.
\end{equation}
Then
\[
\begin{aligned}
\partial_{\nu}(p)p=&{}_{A}\lb\nabla_{\nu}(\eta \cdot \lb \eta, \eta \rb_B^{-1} ), \eta \rb p +{}_{A}\lb \eta \cdot \lb \eta, \eta\rb_B^{-1}, \nabla_{\nu}\eta\rb p\\
=& {}_{A}\lb \nabla_{\nu}(\eta \cdot \lb \eta, \eta \rb_B^{-1} ), \eta \rb + {}_{A}\lb \eta\lb \eta, \eta\rb_B^{-1}\lb \nabla_{\nu}\eta, \eta \rb_B \lb \eta, \eta \rb_B^{-1}, \eta \rb.
\end{aligned}
\]
Since \[\nabla_{\nu}(\eta \cdot \lb \eta, \eta \rb_B^{-1})=(\nabla_{\nu}\eta)\cdot \lb \eta, \eta \rb_{B}^{-1}+ \eta \partial_{\nu}(\lb \eta, \eta \rb_B^{-1}),\] and 
\[\partial_{\nu}(\lb \eta, \eta \rb_B^{-1})=-\lb \eta, \eta \rb_B^{-1} \left (\lb \nabla_{\nu}\eta , \eta \rb_B + \lb \eta, \nabla_{\nu}\eta \rb_B \right) \lb \eta, \eta \rb_B^{-1}, \] 

\[
\begin{aligned}
\partial_{\nu}(p)p=& {}_{A}\lb \nabla_{\nu}(\eta \cdot \lb \eta, \eta \rb_B^{-1} ), \eta \rb + {}_{A}\lb \eta\lb \eta, \eta\rb_B^{-1}\lb \nabla_{\nu}\eta, \eta \rb_B \lb \eta, \eta \rb_B^{-1}, \eta \rb\\
=& {}_{A}\lb \nabla_{\nu}\eta \cdot \lb \eta, \eta \rb_B^{-1}, \eta \rb - p {}_{A}\lb \nabla_{\nu}\eta \cdot \lb \eta, \eta \rb_B^{-1}, \eta \rb\\
=&(1 - p)  {}_{A}\lb \nabla_{\nu}\eta, \eta \cdot \lb \eta, \eta \rb_B^{-1} \rb.
\end{aligned}
\]
Using linearity, it follows that
\[\overline{\partial}(p) p=0 \quad \text{if and only if}\quad (1 - p)  {}_{A}\lb \overline{\nabla}\eta, \eta \cdot \lb \eta, \eta \rb_B^{-1} \rb=0.\]
Then if $ \overline{\nabla}\eta=\eta \cdot b$ for some $b\in B$,
\[
\begin{aligned}
(1 - p) {}_{A}\lb \overline{\nabla}_{\nu}\eta, \eta \cdot \lb \eta, \eta \rb_B^{-1} \rb=&{}_{A}\lb \eta\cdot b, \eta\cdot \lb \eta, \eta \rb_B^{-1} \rb- {}_{A} \lb \eta \cdot \lb \eta, \eta \rb^{-1} , \eta \rb {}_{A}\lb \eta \cdot b, \eta\cdot \lb \eta, \eta \rb_B^{-1} \rb\\
=&{}_{A}\lb \eta\cdot b, \eta\cdot \lb \eta, \eta \rb_B^{-1} \rb - {}_{A}\lb \lb \eta \cdot \lb \eta, \eta \rb_B^{-1} \lb \eta, \eta \cdot b \rb_B, \eta\cdot \lb \eta, \eta \rb_B^{-1} \rb\\
=&0.
\end{aligned}
\]
Conversely, if $\overline{\partial}(p) p=0 $, then $(1 - p)  {}_{A}\lb \overline{\nabla}\eta, \eta \cdot \lb \eta, \eta \rb_B^{-1} \rb=0$. It follows that
\[
\begin{split}
0&=((1 - p)  {}_{A}\lb \overline{\nabla}\eta, \eta \cdot \lb \eta, \eta \rb_B^{-1} \rb )\cdot \eta\\
&=(1-p) ({}_{A}\lb \overline{\nabla}\eta, \eta \cdot \lb \eta, \eta \rb_B^{-1} \rb\cdot \eta)\\
&=(1-p) (\overline{\nabla}\eta \cdot \lb \eta \cdot \lb \eta, \eta \rb_B^{-1}, \eta \rb_B)\\
&=(1-p)\overline{\nabla}\eta.
\end{split}
\]
Therefore,
\[
\begin{split}
\overline{\nabla}\eta&=p\overline{\nabla} \eta \\
&={}_{A}\lb \eta \cdot \lb \eta, \eta \rb_B^{-1}, \eta \rb \overline{\nabla}\eta\\
&=\eta \cdot (\lb \eta, \eta \rb_B^{-1}\lb \eta, \overline{\nabla}\eta \rb_B)
\end{split}
\]
\end{proof}
\begin{rem}
If $\overline{\nabla}\eta=\eta\cdot b$, then $b$ must be $\lb \eta, \eta \rb_B^{-1}\lb \eta, \overline{\nabla}\eta \rb_B$; note that $\overline{\partial}(\lb \eta, \eta \rb_B \lb \eta, \eta \rb_B^{-1})=0=\overline{\partial}(\lb \eta, \eta \rb_B^{-1} \lb \eta, \eta \rb_B)$. From the  first equality,
\begin{equation}
\begin{split}
(\lb \overline{\nabla}\eta, \eta\rb_B +\lb \eta, \overline{\nabla}\eta \rb_B) \lb \eta, \eta \rb_B^{-1}+ \lb \eta, \eta \rb_B \overline{\partial}(\lb \eta, \eta \rb_B^{-1}) =0.
\end{split}
\end{equation}
Thus we have
\begin{equation}\label{E:dbar}
\overline{\partial}(\lb \eta, \eta \rb_B^{-1} )= -\lb \eta, \eta \rb_B^{-1} (\lb \overline{\nabla}\eta, \eta\rb_B +\lb \eta, \overline{\nabla}\eta \rb_B)\lb \eta, \eta \rb_B^{-1}.
\end{equation}
From the second equality,
\begin{equation}\label{E:uniqueness}
\overline{\partial}(\lb \eta, \eta \rb_B^{-1} )\lb \eta, \eta \rb_B + \lb \eta, \eta \rb_B^{-1}(\lb \overline{\nabla}\eta, \eta\rb_B +\lb \eta, \overline{\nabla}\eta \rb_B) =0.
\end{equation}
In (\ref{E:uniqueness}), substitute $\overline{\partial}(\lb \eta, \eta \rb_B^{-1} )$ using (\ref{E:dbar}) and $\overline{\nabla}\eta$ by $\eta \cdot b$ in the last term
\[
\begin{split}
&-\lb \eta, \eta \rb_B^{-1} (\lb \overline{\nabla}\eta, \eta\rb_B +\lb \eta, \overline{\nabla}\eta \rb_B)\lb \eta, \eta \rb_B^{-1} \lb \eta, \eta \rb_B + \lb \eta, \eta \rb_B^{-1}( \lb \overline{\nabla}\eta, \eta \rb_B+ \lb \eta, \eta\cdot b \rb_B )\\
&=-\lb \eta, \eta \rb_B^{-1} \lb \eta, \overline{\nabla}\eta \rb_B +b=0.
\end{split}
\]
\end{rem}

\begin{cor}\cite[Proposition 6.3]{DLF:Sigma}
Let $\eta$ be a Gabor frame in $\Xi$. Then $p_{\tilde{\eta}}$ is a solution of the self  duality equation.
\end{cor}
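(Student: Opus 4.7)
The plan is to reduce the corollary directly to Theorem \ref{T:Soliton}: that theorem converts an identity of the form $\overline{\nabla}\eta = \eta \cdot b$ (for some $b \in B$) into the self-duality equation $(\overline{\partial}p_{\tilde{\eta}}) p_{\tilde{\eta}} = 0$, so all that is needed is to produce such an element $b$ when $\eta$ is a Gabor frame.

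The key input is the Wexler-Raz duality \eqref{E:Wexler-Raz}. Since $\eta$ is a Gabor frame, $\lb \eta, \eta \rb_B$ is invertible in $B$ by the first theorem of Section \ref{S:Gabor}; because $B = \mathcal{A}^{\infty}(\Lambda^{\circ}, \bar{c})$ is closed under the holomorphic functional calculus, both $\lb \eta, \eta \rb_B^{-1/2}$ and $\lb \eta, \eta \rb_B^{-1}$ lie in $B$, so $\tilde{\eta} = \eta \cdot \lb \eta, \eta \rb_B^{-1/2}$ is a well-defined tight Gabor frame. Moreover, from the formula $(\overline{\nabla}\eta)(t) = \frac{2\pi i t}{\theta}\eta(t) + i\,\eta'(t)$ and the closure of $\mathscr{S}(\mathbb{R})$ under differentiation and multiplication by polynomials, the vector $\overline{\nabla}\eta$ is again in $\mathscr{S}(\mathbb{R})$, which is what allows Wexler-Raz to be applied to it.

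I then apply \eqref{E:Wexler-Raz} with $f = \overline{\nabla}\eta$ and $g = \eta$ to obtain $\overline{\nabla}\eta = \tilde{\eta} \cdot \lb \tilde{\eta}, \overline{\nabla}\eta \rb_B$. Unwinding $\tilde{\eta} = \eta \cdot \lb \eta, \eta \rb_B^{-1/2}$ and using the sesquilinearity identity $\lb \xi \cdot c, \zeta \rb_B = c^{*} \lb \xi, \zeta \rb_B$ together with self-adjointness of the inverse square root of a positive element, the right-hand side collapses to $\eta \cdot \bigl( \lb \eta, \eta \rb_B^{-1} \lb \eta, \overline{\nabla}\eta \rb_B \bigr)$. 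Setting $b := \lb \eta, \eta \rb_B^{-1} \lb \eta, \overline{\nabla}\eta \rb_B \in B$ yields precisely the linear equation required by Theorem \ref{T:Soliton} (and matches the formula predicted in the remark following it).

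The only mild obstacle I foresee is the bookkeeping of how $\lb \eta, \eta \rb_B^{-1/2}$ is transported past the first slot of the $B$-valued inner product in the last manipulation; once one notes it is self-adjoint, the step is forced, and the corollary is an immediate consequence of Theorem \ref{T:Soliton}.
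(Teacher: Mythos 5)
Your proposal is correct and follows essentially the same route as the paper: both apply the Wexler--Raz identity \eqref{E:Wexler-Raz} to $\overline{\nabla}\eta$ (which lies in $\Xi=\mathscr{S}(\mathbb{R})$), unwind $\tilde{\eta}=\eta\cdot\lb\eta,\eta\rb_B^{-1/2}$ to obtain $\overline{\nabla}\eta=\eta\cdot\lb\eta,\eta\rb_B^{-1}\lb\eta,\overline{\nabla}\eta\rb_B$, and conclude via Theorem \ref{T:Soliton}. You merely make explicit a few details the paper leaves implicit, such as the self-adjointness of $\lb\eta,\eta\rb_B^{-1/2}$ and the closure of $\mathscr{S}(\mathbb{R})$ under $\overline{\nabla}$.
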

\begin{proof}
By  Wexler-Raz duality (\ref{E:Wexler-Raz}) for a tight Gabor frame, we have
\[\zeta= \tilde{\eta}\cdot \lb \tilde{\eta}, \zeta \rb_B \quad \forall \zeta \in \Xi.  \]
Since $\eta$ in $\Xi$, so is $\overline{\nabla}(\eta)$. Thus taking $\zeta$ as $\overline{\nabla}\eta$
\[\overline{\nabla}(\eta)=\eta \dot \lb \eta, \eta \rb_B^{-1}\lb \eta,\overline{\nabla} \eta \rb_B. \]
Then the conclusion follows from Theorem \ref{T:Soliton}.
\end{proof}

Let $\GL(B)$ be the set of invertible elements in $B$. An action of $\GL(B)$ on noncommutative solitons was introduced in \cite{DKL:Sigma} by the right multiplication; for $U\in GL(B)$
\[ \xi \to \xi \cdot U=\xi_U.\]
Indeed, if $\xi$ satisfies a self-duality equation, or $\overline{\nabla}(\xi)=\xi \cdot b$ for some $b\in B$, then  by the Leibniz rule for the connection, one finds that $\xi_U$ is the solution of an equation of the form $\overline{\nabla}\xi_U=\xi_U \cdot b_U$ where 
\begin{equation}\label{E:Gaugetransformed}
b_U=U^{-1}b U+U^{-1}\overline{\partial}U.
\end{equation}
Note that this action preserves the invertibility of $\lb \xi, \xi \rb_B$, thus preserves Gabor frames. Moreover, it is not difficult to check that the Rieffel-type projections are invariant under the action (see \cite[p. 232]{L:Harmonic}). 

When $\lambda$ is a scalar, i.e., $\lambda\in \mathbb{C}$, $\overline{\nabla}\eta=\eta \cdot \lambda$ has the solutions,  the Gaussians of the form $Ce^{-\pi \theta t^2-2i \lambda t}$.  In \cite{DKL:Sigma, DKL:Sigma2} it is analyzed  when two Gaussian solitons are gauge to each other.
\begin{prop}
Let $\xi$ be a solution of equation with $\lambda \in \mathbb{C}$; and let $U\in\GL(B)$. Then the transformed $\lambda_U$ will be again constant if and only if there exists a pair of integers $(m,n)$ such that 
\begin{equation*}
U=C_{mn}U_1^mU_2^n.
\end{equation*}
Furthermore, 
\begin{equation*}
\lambda_U-\lambda=\pi i(m+ni).
\end{equation*}
\end{prop}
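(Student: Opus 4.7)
The plan is to apply the gauge transformation formula \eqref{E:Gaugetransformed} with $b = \lambda$ and translate the condition ``$\lambda_U$ again scalar'' into a scalar eigenvalue equation for the derivation $\overline{\partial}$ on $B$, then solve this equation by Fourier analysis on the smooth noncommutative torus $B$.

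First, since $\lambda \in \mathbb{C}$ is central in $B$, conjugation gives $U^{-1}\lambda U = \lambda$, so \eqref{E:Gaugetransformed} reduces to $\lambda_U = \lambda + U^{-1}\overline{\partial}U$. Hence $\lambda_U$ is a scalar if and only if $\overline{\partial}U = \mu\,U$ for some $\mu \in \mathbb{C}$, in which case $\lambda_U - \lambda = \mu$. Now I expand $U$ in its Fourier series
\[ U = \sum_{(m,n) \in \mathbb{Z}^2} u_{mn}\, U_1^m U_2^n \]
with rapidly decreasing coefficients, which is available because $B$ is a smooth noncommutative torus. Since $\overline{\partial} = \partial_1 + i\partial_2$ is a derivation acting diagonally on monomials, $\overline{\partial}(U_1^m U_2^n)$ is a scalar multiple of $U_1^m U_2^n$ equal to $2\pi i(m+ni) U_1^m U_2^n$ (up to the paper's normalization for $\partial_\nu$, possibly a factor of $\tfrac{1}{2}$). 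The eigenequation therefore rewrites as
\[ \sum_{(m,n)} u_{mn}\bigl(2\pi i(m+ni) - \mu\bigr) U_1^m U_2^n = 0. \]
By the uniqueness of Fourier coefficients (equivalently, by the faithfulness of the trace on $B$), every summand vanishes, so $u_{mn}(2\pi i(m+ni) - \mu) = 0$ for all $(m,n)$. The scalars $2\pi i(m+ni)$ are pairwise distinct as $(m,n)$ ranges over $\mathbb{Z}^2$, hence at most one Fourier coefficient is nonzero. Invertibility (or merely nonvanishing) of $U$ forces exactly one coefficient to survive, giving $U = C_{mn} U_1^m U_2^n$ for a unique pair $(m,n)$ and a nonzero scalar $C_{mn}$.

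Conversely, any such $U = C_{mn} U_1^m U_2^n$ is manifestly invertible in $B$ as a nonzero scalar multiple of a product of unitaries, and the direct computation $U^{-1}\overline{\partial}U = \pi i(m+ni)$ (with the paper's normalization) yields the asserted formula for $\lambda_U - \lambda$. The only place requiring care is the Fourier-analytic step, namely differentiating the series $\sum u_{mn} U_1^m U_2^n$ termwise and invoking uniqueness of coefficients; both are routine because the sequence $(u_{mn})$ is Schwartz and the monomials $\{U_1^m U_2^n\}$ form a topological basis of $B$. No deeper obstacle arises, and the argument runs in parallel for the anti-holomorphic version.
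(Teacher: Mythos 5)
Your argument is correct and complete. Note first that the paper itself offers no proof of this proposition: it is quoted from Dabrowski--Krajewski--Landi (\cite{DKL:Sigma, DKL:Sigma2}) as background, so there is no in-paper argument to compare against. Your route --- reduce to the eigenvalue equation $\overline{\partial}U=\mu U$ via centrality of $\lambda$ in (\ref{E:Gaugetransformed}), expand $U$ in the Schwartz Fourier series of the smooth torus $B$, observe that $\overline{\partial}$ acts diagonally on monomials with pairwise distinct eigenvalues indexed by $(m,n)$, and conclude that invertibility leaves exactly one surviving mode --- is the standard and, as far as I can tell, the intended proof; termwise differentiation and uniqueness of coefficients are unproblematic for rapidly decreasing $(u_{mn})$. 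The one point you rightly flag is the normalization: with the paper's convention $\partial_{\nu}(U_{\mu})=2\pi i\,\delta_{\nu,\mu}U_{\mu}$ and $\overline{\partial}=\partial_1+i\partial_2$, the eigenvalue is $2\pi i(m+ni)$, which gives $\lambda_U-\lambda=2\pi i(m+ni)$ rather than the displayed $\pi i(m+ni)$. This factor of $2$ is the same normalization ambiguity the paper itself concedes in the proof of the Polishchuk/Rosenberg theorem (``a slightly different notation for $\overline{\partial}$ up to the factor $2$''), so it is a defect of the paper's conventions rather than of your argument; it would be worth stating explicitly which normalization of $\overline{\partial}$ you adopt so that the final formula is unambiguous.
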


In \cite{DKL:Sigma, DKL:Sigma2}  Dabrowski, Krajewski, and  Landi suggest the following question;\\
 
Q: is it possible to gauge a Gaussian soliton to any solution of the self duality equation?\\  

In view of Theorem \ref{T:Soliton}, this question is equivalent to the following statement: Choose a $\lambda \in \mathbb{C}$,  then for any $b \in B$ is there an element $U\in \GL(B)$ such that $b=\lambda+U^{-1}\overline{\partial}U$ ? This is related to solving \emph{inhomogeneous Cauchy-Riemann equation of the form}
 \begin{equation}\label{E:CauchyRiemann}
\overline{\partial}U=U(b-\lambda).
\end{equation}

Based on the following Polishchuk's observation, this question is reduced to compute the trace of $b$(see Corollary \ref{C:Gaugeaction}).

\begin{thm}\cite[Theorem 3.6]{P:Ex}, \cite[Theorem 6.2]{Ro:Laplace}
Let $\tau$ be the unique trace of the noncommutative torus $B$.  Then for $b \in B$, $U^{-1}\overline{\partial}U=b$ has a nontrivial solution if and only if $\tau(b)\in \pi i (\mathbb{Z}+i\mathbb{Z})$.
\end{thm}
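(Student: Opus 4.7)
The plan is to attack $U^{-1}\overline{\partial}U=b$ in two stages: a $K_{1}$-theoretic obstruction supplies necessity, while a reduction to the trace-zero case handles sufficiency.

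For the necessary direction, I would apply $\tau$ to both sides and study the map $\Phi:\GL(B)\to\mathbb{C}$ given by $\Phi(U)=\tau(U^{-1}\overline{\partial}U)$. The Leibniz rule for $\overline{\partial}$ combined with the tracial property of $\tau$ yields $\Phi(UV)=\Phi(U)+\Phi(V)$, so $\Phi$ is a group homomorphism into the additive group of $\mathbb{C}$. To show $\Phi$ vanishes on the connected component of $1$, write $U=\exp(y)$ and use the identity
\[
\overline{\partial}e^{y}=\int_{0}^{1}e^{sy}(\overline{\partial}y)e^{(1-s)y}\,ds,
\]
then apply cyclicity of $\tau$ together with $\tau\circ\partial_{\nu}=0$ to obtain $\Phi(\exp y)=\tau(\overline{\partial}y)=0$. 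Hence $\Phi$ factors through $K_{1}(B)\cong\mathbb{Z}^{2}$; a direct computation on the two unitary generators of $B$ via the explicit action of $\overline{\partial}$ on monomials identifies the image of $\Phi$ with the lattice $\pi i(\mathbb{Z}+i\mathbb{Z})$, giving the necessary condition $\tau(b)\in\pi i(\mathbb{Z}+i\mathbb{Z})$.

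For sufficiency, given $\tau(b)=\pi i(m+ni)$ with $m,n\in\mathbb{Z}$, I would use the preceding generator calculation in reverse to produce a monomial $U_{0}$ in the generators of $B$ with $U_{0}^{-1}\overline{\partial}U_{0}=\pi i(m+ni)$, viewed as a central scalar. Writing $U=U_{0}W$, the equation reduces to
\[
W^{-1}\overline{\partial}W=b-\pi i(m+ni)=:b',\qquad \tau(b')=0.
\]
To solve the trace-zero problem, expand $b'$ in the Fourier basis of $B$ and invert $\overline{\partial}$ monomial by monomial to produce $V\in B$ with $\overline{\partial}V=b'$; Schwartz decay of the coefficients is preserved by this inversion. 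The first guess $W=\exp(V)$ yields $W^{-1}\overline{\partial}W=b'+r$, where $\tau(r)=0$ by the argument used in the necessity step, so one iterates: invert $\overline{\partial}$ on $r$ and refine $V$ by higher-order corrections $V_{2},V_{3},\dots$.

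The main obstacle is precisely this last convergence step, since $\exp$ does not intertwine $\overline{\partial}$ in the noncommutative setting and the commutator remainders $r$ must be controlled uniformly. If a direct Picard iteration proves hard to bound in the Fr\'echet topology of $B$, I would pivot to a continuity argument on the rescaled family $b'_{s}=sb'$ for $s\in[0,1]$: the implicit function theorem at $s=0$ produces local solutions (its linearisation $\dot W\mapsto\overline{\partial}\dot W$ at $W=1$ is surjective onto $\ker\tau$ by Fourier inversion), and the Laplacian-based estimates of \cite{Ro:Laplace} would then be used to upgrade openness to closedness of the admissible set, covering all of $[0,1]$.
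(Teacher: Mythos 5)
First, a caveat on the comparison: the paper does not actually prove this theorem --- its ``proof'' is a one-line attribution to Polishchuk \cite{P:Ex} (and Rosenberg \cite{Ro:Laplace}) together with a remark about a factor of $2$ in the normalization of $\overline{\partial}$. So you are attempting something the paper deliberately outsources. Your \emph{necessity} argument is essentially correct and complete: $\Phi(U)=\tau(U^{-1}\overline{\partial}U)$ is a homomorphism by Leibniz plus traciality, it kills exponentials by the Duhamel formula and $\tau\circ\partial_\nu=0$, it therefore descends to $\GL(B)/\GL(B)_0\cong K_1(B)\cong\mathbb{Z}^2$ (a citable fact for the smooth irrational rotation algebra, using spectral invariance), and evaluation on $U_1,U_2$ gives the lattice. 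With the paper's normalization $\partial_\nu U_\mu=2\pi i\delta_{\nu\mu}U_\mu$ the lattice comes out as $2\pi i(\mathbb{Z}+i\mathbb{Z})$ rather than $\pi i(\mathbb{Z}+i\mathbb{Z})$, but the paper itself waves at this discrepancy, so I will not hold it against you.

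The sufficiency direction, however, has a genuine gap, and it sits exactly where you say it does. The reduction to $\tau(b')=0$ via a monomial $U_0$ is fine, and so is the Fourier inversion of $\overline{\partial}$ on $\ker\tau$ (the eigenvalues $2\pi i(m+in)$ vanish only at the origin and grow linearly, so Schwartz decay is preserved). But everything after that is a plan, not a proof. For the Picard iteration you give no contraction estimate, and none is readily available: the remainder $r$ consists of iterated commutators of $V$ with $\overline{\partial}V$, and there is no small parameter unless $b'$ is small. For the continuity method, openness is indeed controllable --- wherever a solution $W_s$ of $W_s^{-1}\overline{\partial}W_s=sb'$ exists, the linearization $X\mapsto\overline{\partial}X+[sb',X]$ equals $\Ad_{W_s}^{-1}\circ\overline{\partial}\circ\Ad_{W_s}$ and is therefore surjective onto $\ker\tau$; you should say this, since surjectivity of the linearization away from $s=0$ is not automatic. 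The real obstruction is \emph{closedness}: one must bound $\|W_s\|$ and $\|W_s^{-1}\|$ (in all the Fr\'echet seminorms) uniformly up to the endpoint, and the naive estimate on the resulting ODE $\dot W=\overline{\partial}^{-1}(Wb'W^{-1})W$ only yields a Riccati-type differential inequality for $\|W\|\,\|W^{-1}\|$, which permits finite-time blow-up. Deferring this to ``the Laplacian-based estimates of \cite{Ro:Laplace}'' is not a proof; supplying those a priori bounds is precisely the content of the cited theorems, and Polishchuk's actual argument avoids the issue entirely by constructing $U$ from holomorphic vectors (noncommutative theta functions) in Heisenberg bimodules rather than by perturbation. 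As written, your proposal proves the ``only if'' half and reduces the ``if'' half to the hard part of the cited results without resolving it.
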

\begin{proof}
This follows from (1) of \cite[Theorem 3.6]{P:Ex}. Because of a slightly different notation for $\overline{\partial}$ up to the factor $2$, the range of $\tau(U^{-1}\overline{\partial}U)$ is changed to $\pi i(\mathbb{Z}+i\mathbb{Z})$. 
\end{proof}

\begin{cor}\label{C:Gaugeaction}
Let $\eta (\in \Xi)$ be a solution of $\overline{\nabla}\eta=\eta \cdot b$ for some $b\in B$. Then there is a Gaussian $\xi$ and $U\in \GL(B)$ such that $\eta=\xi \cdot U$ if and only if $\lambda-\tau(b) \in \pi i (\mathbb{Z}+i\mathbb{Z})$ where $\overline{\nabla}\xi=\xi \cdot \lambda $ for $\lambda \in \mathbb{C}$.
\end{cor}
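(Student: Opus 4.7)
\emph{Proof plan.} The statement reduces directly to the Polishchuk--Rosenberg theorem just quoted, applied to $c := b-\lambda$. The bridge is the gauge transformation formula (\ref{E:Gaugetransformed}): when the source connection coefficient is a scalar $\lambda\in\mathbb{C}$, the nonabelian term $U^{-1}\lambda U$ simplifies to $\lambda$, and the formula collapses to the single inhomogeneous Cauchy--Riemann equation $U^{-1}\overline{\partial}U = b-\lambda$ on $B$, whose solvability is governed by a trace condition.

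\textbf{Forward direction.} Assume $\eta=\xi\cdot U$ for a Gaussian $\xi$ with $\overline{\nabla}\xi=\xi\cdot\lambda$. Since $\lambda$ is central in $B$, formula (\ref{E:Gaugetransformed}) gives $\overline{\nabla}\eta=\eta\cdot(\lambda+U^{-1}\overline{\partial}U)$. Matching this against $\overline{\nabla}\eta=\eta\cdot b$ forces $U^{-1}\overline{\partial}U = b-\lambda$. Taking $\tau$ and invoking Polishchuk--Rosenberg, together with the normalization $\tau(1_B)=1$, yields $\tau(b)-\lambda\in\pi i(\mathbb{Z}+i\mathbb{Z})$, equivalently $\lambda-\tau(b)\in\pi i(\mathbb{Z}+i\mathbb{Z})$.

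\textbf{Backward direction.} Conversely, if $\lambda-\tau(b)\in\pi i(\mathbb{Z}+i\mathbb{Z})$, then $\tau(b-\lambda)\in\pi i(\mathbb{Z}+i\mathbb{Z})$, and Polishchuk--Rosenberg furnishes $U\in\GL(B)$ with $U^{-1}\overline{\partial}U=b-\lambda$. Hence $\overline{\partial}U=U(b-\lambda)$ and $\overline{\partial}(U^{-1})=-U^{-1}(\overline{\partial}U)U^{-1}=-(b-\lambda)U^{-1}$. Set $\xi:=\eta\cdot U^{-1}\in\Xi$; by the right Leibniz rule (\ref{E:Leibniz}),
\[
\overline{\nabla}\xi=(\overline{\nabla}\eta)U^{-1}+\eta\,\overline{\partial}(U^{-1})=\eta\,b\,U^{-1}-\eta(b-\lambda)U^{-1}=\eta\lambda U^{-1}=\xi\cdot\lambda.
\]
Since the only Schwartz-class solutions of $\overline{\nabla}\zeta=\zeta\cdot\lambda$ are the Gaussians $Ce^{-\pi\theta t^2-2i\lambda t}$ recalled just before the corollary, $\xi$ is such a Gaussian and $\eta=\xi\cdot U$ is the desired factorization.

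\textbf{Expected obstacle.} Once Polishchuk--Rosenberg is available, no substantive obstacle remains: the corollary's entire content is that the scalar case of (\ref{E:Gaugetransformed}) converts the gauge equivalence ``$\eta\sim_U\xi$ with $\xi$ Gaussian'' into precisely the Cauchy--Riemann equation $U^{-1}\overline{\partial}U=b-\lambda$, so the trace criterion transfers verbatim. The genuine analytic difficulty sits inside the Polishchuk--Rosenberg theorem, which we use as a black box; the only care in the present argument is tracking the right-module conventions and confirming that $\lambda$ commutes with $U$ so that $U^{-1}\lambda U=\lambda$.
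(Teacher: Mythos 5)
Your argument is correct and is exactly the route the paper intends: the corollary is stated there as an immediate consequence of the gauge transformation formula (\ref{E:Gaugetransformed}) specialized to a scalar $\lambda$ together with the Polishchuk--Rosenberg trace criterion, which is precisely what you spell out (including the uniqueness of $b$ noted in the remark after Theorem \ref{T:Soliton} and the symmetry of the lattice $\pi i(\mathbb{Z}+i\mathbb{Z})$ under negation). No gaps; your backward direction even makes explicit the ODE step identifying $\xi=\eta\cdot U^{-1}$ as a Gaussian, which the paper leaves implicit.
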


It is already mentioned in \cite{L:Harmonic} that the question Q is not true in general as we know a constraint exists. However, in some good cases it is possible to find a nontrivial solution of (\ref{E:CauchyRiemann}) for some $\lambda\in \mathbb{C}$. In other words, we can gauge a class of noncommutative solitons to  Gaussian solitons.  

\begin{thm}\label{T:Trace}
Let $\eta$ be a Gabor frame for Gabor system $\mc{G}(\eta, \Lambda)$ and  satisfy $\overline{\nabla}\eta=\eta \cdot b $ for some $b \in B$.  Then $\tau(b)=\lb \overline{\nabla}\eta , \eta \rb_{L^2(\mathbb{R})} $ where $\lb \cdot,\cdot \rb_{L^2(\mathbb{R})}$ is the inner product on the Hilbert space $L^{2}(\mathbb{R})$.
\end{thm}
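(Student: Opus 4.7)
The plan is to recognize $\tau(b)$ as the coefficient $b(0)$ in the Fourier-type expansion $b=\sum_{\lambda^\circ}b(\lambda^\circ)\pi^*(\lambda^\circ)$ of $b\in B$, and to isolate it by pairing $\overline{\nabla}\eta=\eta\cdot b$ against $\eta$ in the $L^2$ inner product. Substituting the explicit right action formula from Section~\ref{S:Gabor},
\[
\eta\cdot b=\mathrm{vol}(\Lambda)^{-1}\sum_{\lambda^\circ\in\Lambda^\circ}b(\lambda^\circ)\pi^*(\lambda^\circ)\eta,
\]
the pairing yields
\[
\lb\overline{\nabla}\eta,\eta\rb_{L^2(\mathbb{R})}=\mathrm{vol}(\Lambda)^{-1}\sum_{\lambda^\circ}b(\lambda^\circ)\lb\pi^*(\lambda^\circ)\eta,\eta\rb_{L^2(\mathbb{R})}.
\]
For a Parseval frame, the biorthogonality $\lb\pi^*(\lambda^\circ)\eta,\eta\rb_{L^2}=\mathrm{vol}(\Lambda)\delta_{\lambda^\circ,0}$ from the Wexler--Raz type result of Section~\ref{S:Gabor} collapses the sum immediately to $b(0)=\tau(b)$.

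To extend this to a general Gabor frame $\eta$, I would pass to the associated Parseval frame $\widetilde{\eta}=\eta\cdot S^{-1/2}$ with $S=\lb\eta,\eta\rb_B$; Theorem~\ref{T:Soliton} gives $\overline{\nabla}\widetilde{\eta}=\widetilde{\eta}\cdot\widetilde{b}$ for some $\widetilde{b}\in B$, and the Parseval calculation above yields $\tau(\widetilde{b})=\lb\overline{\nabla}\widetilde{\eta},\widetilde{\eta}\rb_{L^2}$. Applying the Leibniz rule to $\overline{\nabla}\widetilde{\eta}$ gives $b=S^{-1/2}\widetilde{b}S^{1/2}+S^{-1/2}\overline{\partial}(S^{1/2})$, and expanding $\lb\overline{\nabla}\widetilde{\eta},\widetilde{\eta}\rb_{L^2}$ back in terms of $\eta$ and $S^{-1/2}$, via the identity $\lb f\cdot c_1,g\cdot c_2\rb_{L^2}=\tau(c_2^*\lb g,f\rb_B c_1)$ (which itself follows from $\tau(\lb f,g\rb_B)=\lb g,f\rb_{L^2}$), would reduce the desired equality $\tau(b)=\lb\overline{\nabla}\eta,\eta\rb_{L^2}$ to vanishing of the single ``logarithmic'' correction $\tau(S^{-1/2}\overline{\partial}(S^{1/2}))$.

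The main technical obstacle is this last vanishing. I would establish it via the Duhamel identity $\overline{\partial}(e^A)=\int_0^1 e^{sA}\overline{\partial}(A)e^{(1-s)A}\,ds$: multiplying by $e^{-A}$ on the left, taking $\tau$, and applying cyclicity yields $\tau(e^{-A}\overline{\partial}(e^A))=\tau(\overline{\partial}A)=0$, where the last equality holds because $\overline{\partial}(U_1^mU_2^n)=2\pi i(m+in)U_1^mU_2^n$ has zero constant Fourier coefficient. Taking $A=\log S$ gives $\tau(S^{-1}\overline{\partial}S)=0$; expanding $\overline{\partial}S=\overline{\partial}(S^{1/2})S^{1/2}+S^{1/2}\overline{\partial}(S^{1/2})$ and cycling under the trace then produces $\tau(S^{-1/2}\overline{\partial}(S^{1/2}))=\tfrac{1}{2}\tau(S^{-1}\overline{\partial}S)=0$, which closes the argument.
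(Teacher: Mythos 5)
Your treatment of the Parseval case is correct and is essentially the paper's own argument (the paper's proof, despite the statement's wording, also begins from ``the fact that $\mc{G}(\eta,\Lambda)$ is a \emph{tight} Gabor frame'' and uses orthogonality of $\{\pi(\lambda^{\circ})\eta\}$ to match coefficients; your direct pairing against $\eta$ just streamlines that). The Duhamel computation showing $\tau(S^{-1/2}\overline{\partial}(S^{1/2}))=\tfrac12\tau(S^{-1}\overline{\partial}S)=\tau(\overline{\partial}(\tfrac12\log S))=0$ is also correct (and legitimate, since $S$ is positive and invertible in the smooth algebra, which is closed under holomorphic functional calculus).

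The gap is in the reduction from the general frame to the Parseval one: your bookkeeping proves the wrong identity. Your own pairing formula gives, applied directly to $\eta$,
\begin{equation*}
\lb \overline{\nabla}\eta,\eta\rb_{L^2}=\lb \eta\cdot b,\eta\rb_{L^2}=\tau\bigl(\lb\eta,\eta\rb_B\, b\bigr)=\tau(Sb),
\end{equation*}
whereas the chain you set up --- $\tau(b)=\tau(\widetilde b)+\tau(S^{-1/2}\overline{\partial}(S^{1/2}))$ together with $\tau(\widetilde b)=\lb\overline{\nabla}\widetilde\eta,\widetilde\eta\rb_{L^2}$ and the expansion
\begin{equation*}
\lb\overline{\nabla}\widetilde\eta,\widetilde\eta\rb_{L^2}=\tau\bigl(S^{-1/2}\lb\eta,\overline{\nabla}\eta\rb_B S^{-1/2}\bigr)-\tau\bigl(S^{-1/2}\overline{\partial}(S^{1/2})\bigr)=\tau\bigl(S^{-1}\lb\eta,\overline{\nabla}\eta\rb_B\bigr)-\tau\bigl(S^{-1/2}\overline{\partial}(S^{1/2})\bigr)
\end{equation*}
--- collapses, after the logarithmic term cancels, to the tautology $\tau(b)=\tau(S^{-1}\lb\eta,\overline{\nabla}\eta\rb_B)$ (which is just the known formula $b=S^{-1}\lb\eta,\overline{\nabla}\eta\rb_B$). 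What the theorem asserts is instead $\tau(b)=\tau(Sb)$, i.e.\ $\tau\bigl((S-1)b\bigr)=0$, and this residual term is never touched by the vanishing of $\tau(S^{-1/2}\overline{\partial}(S^{1/2}))$. Cyclicity and $\tau\circ\overline{\partial}=0$ only give that $\tau(b)$ and $\tau(Sb)$ are each purely imaginary, not that they coincide. So either you must restrict to tight frames (where $S=1$ and the statement is exactly your first paragraph, matching the paper), or you need a genuinely new argument for $\tau\bigl((S-1)b\bigr)=0$; the proposal as written supplies neither, and in fact your identity $\lb\overline{\nabla}\eta,\eta\rb_{L^2}=\tau(Sb)$ makes visible that the non-tight case is not a formal consequence of the tight one.
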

\begin{proof}
By Wexler-Raz biorthogonality the fact that $\mc{G}(\eta, \Lambda)$ is a tight Gabor frame for $L^2(\mathbb{R})$ implies that $\mc{G}(\eta, \Lambda^{\circ})$ is an orthogonal system, i.e. a Riesz basis for $L^2(\mathbb{R})$. Therefore we can expand $\overline{\nabla}\eta$ in terms of $\{\pi(\lambda^{\circ})\eta\mid \lambda^{\circ}\in \Lambda^{\circ}\}$. Write $b=\sum_{\lambda^{\circ} \in \Lambda^{\circ}}b(\lambda^{\circ})\pi(\lambda^{\circ})$ where $b(\lambda^{\circ})$'s are rapidly decreasing.
Then\begin{equation*} \begin{aligned} \eta\cdot b=&\sum_{\lambda^{\circ}\in \Lambda^{\circ}}b(\lambda^{\circ})\pi(\lambda^{\circ})^*\eta\\
=&\sum_{\lambda^{\circ}\in \Lambda^{\circ}}b(-\lambda^{\circ})c(\lambda^{\circ},\lambda^{\circ})\pi(\lambda^{\circ})\eta
\end{aligned}
\end{equation*}
Thus the condition $\overline{\nabla}\eta=\eta \cdot b $ implies that
\[ \sum_{\lambda^{\circ}\in \Lambda^{\circ}}\lb \overline{\nabla}\eta, \pi(\lambda^{\circ})\eta \rb_{L^2(\mathbb{R})}\pi(\lambda^{\circ})\eta= \sum_{\lambda^{\circ}\in \Lambda^{\circ}}b(-\lambda^{\circ})c(\lambda^{\circ},\lambda^{\circ})\pi(\lambda^{\circ})\eta.\] It follows that  from the orthogonality of $\{\pi(\lambda^{\circ})\eta\mid \lambda^{\circ}\in \Lambda^{\circ}\}$
\[
\lb \overline{\nabla}\eta, \pi(\lambda^{\circ})\eta \rb_{L^2(\mathbb{R})}= b(-\lambda^{\circ})c(\lambda^{\circ},\lambda^{\circ}).
\]
Hence $\tau(b)=b(0,0)=\lb \overline{\nabla}\eta , \eta \rb_{L^2(\mb{R})}$.
\end{proof}
\begin{prop}\label{P:HypersecantGaussian}
Let $\eta$ be the hyperbolic secant of the form  $\displaystyle \left(\frac{\pi}{2}\right)^{\frac{1}{2}}\frac{1}
 {\cosh(\pi t)}$. Then $\tau(b)=0$ where $\overline{\nabla}\eta=\eta \cdot b$, so that the Gaussians $\xi$ associated with $\overline{\nabla}\xi=\xi \cdot \lambda$ for $\lambda \in \pi i (\mathbb{Z}+i\mathbb{Z})$  are gauged to $\eta$. Moreover, if the invertible $W$ such that $\eta=\xi \cdot W$  cannot be of the form $U^mV^n$ (modulo $\mathbb{T}$ ) where $U$ and $V$ are generators of $B$.
\end{prop}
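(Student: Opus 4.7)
The plan is to reduce the computation of $\tau(b)$ to an ordinary integral on $L^2(\mathbb{R})$ via Theorem \ref{T:Trace}, verify that this integral vanishes by a parity argument on the hyperbolic secant, and then read off the remaining assertions from Corollary \ref{C:Gaugeaction} combined with the Proposition classifying Gaussian-to-Gaussian gauge equivalences.

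For the real-valued even Schwartz function $\eta(t)=\left(\frac{\pi}{2}\right)^{1/2}\frac{1}{\cosh(\pi t)}$, one computes directly
\[
(\overline{\nabla}\eta)(t) = \frac{2\pi i t}{\theta}\,\eta(t) + i\,\eta'(t).
\]
By Theorem \ref{T:Trace} we then have
\[
\tau(b) = \lb \overline{\nabla}\eta, \eta\rb_{L^2(\mathbb{R})} = \frac{2\pi i}{\theta}\int_{\mathbb{R}} t\,\eta(t)^2\,dt + i\int_{\mathbb{R}}\eta'(t)\,\eta(t)\,dt,
\]
where I have used that $\eta$ is real. The first integrand is odd (a linear factor times a square of an even function), so the first integral vanishes. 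The second integrand equals $\frac{1}{2}\frac{d}{dt}[\eta(t)^2]$, and since $\eta$ is Schwartz the second integral also vanishes. Hence $\tau(b) = 0$.

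With $\tau(b)=0$ in hand, every $\lambda \in \pi i(\mathbb{Z}+i\mathbb{Z})$ satisfies $\lambda - \tau(b) \in \pi i(\mathbb{Z}+i\mathbb{Z})$, so Corollary \ref{C:Gaugeaction} produces a Gaussian $\xi$ with $\overline{\nabla}\xi = \xi \cdot \lambda$ together with an invertible $W \in \GL(B)$ such that $\eta = \xi \cdot W$. For the final assertion I argue by contradiction: if $W = c\,U_1^m U_2^n$ for some $c \in \mathbb{T}$ and $(m,n)\in\mathbb{Z}^2$, then the Proposition characterizing when two Gaussian solitons are gauge equivalent forces $\xi \cdot W$ to be itself a Gaussian soliton, with constant parameter $\lambda + \pi i(m+ni)$. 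But $\eta = \xi \cdot W$ is a hyperbolic secant, not a Gaussian, a contradiction; so no such $W$ can exist.

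The main (minor) caveat is that the proof of Theorem \ref{T:Trace} invokes the tight-Gabor-frame form of Wexler-Raz duality, whereas our $\eta$ is only declared to be a Gabor frame. In a full write-up one should either verify that the normalization $(\pi/2)^{1/2}$ already places $\eta$ in Parseval form for the chosen lattice $\Lambda$, or first pass to $\tilde{\eta} = \eta\cdot \lb \eta, \eta\rb_B^{-1/2}$ and transfer the conclusion back via the self-adjointness of $\lb \eta, \eta\rb_B^{-1/2}$; the parity and decay properties of the hyperbolic secant are robust enough that the parity calculation survives the normalization intact.
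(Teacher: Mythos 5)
Your proof is correct and follows essentially the same route as the paper: Theorem \ref{T:Trace} reduces $\tau(b)$ to $\lb \overline{\nabla}\eta,\eta\rb_{L^2(\mathbb{R})}$, which vanishes for the hyperbolic secant (the paper writes $i\eta'=-i\pi\tanh(\pi t)\eta$ and kills both integrals as odd integrands, while you kill the second as a total derivative of $\eta^2$ --- an equivalent observation), and the remaining claims follow from Corollary \ref{C:Gaugeaction} together with the classification of constant-parameter gauge transformations, your explicit contradiction (a monomial $W$ would force $\eta$ to solve $\overline{\nabla}\eta=\eta\cdot\lambda_W$ with constant $\lambda_W$ and hence be a Gaussian) being a slightly more detailed version of the paper's one-line appeal to that classification. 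Your closing caveat is well taken: the proof of Theorem \ref{T:Trace} really does use tightness (orthogonality of $\{\pi(\lambda^{\circ})\eta\}$) even though its statement only assumes a Gabor frame, and the paper applies it to the hyperbolic secant without comment, so your proposed normalization step is a point of extra care rather than a defect of your argument.
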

\begin{proof}
 $\eta$  is a Gabor frame due to Janssen and Strohmer and $p_{\eta}$ belongs to $B$ by \cite[Theorem 3.6]{L:Gabor}.
Note that $\displaystyle \overline{\nabla}\eta(t)=i\pi\left(\frac{2t}{\theta}-\tanh(t)\right)\eta(t)$  up to a constant. Therefore
\[\lb \overline{\nabla}\eta , \eta \rb_{L^2(\mathbb{R})}= 2\pi i\int_{-\infty}^{\infty} t \eta^2(t)dt -i \pi\int_{-\infty}^{\infty} \tanh(t)\eta^2(t)dt. \] Since $\eta$ is an even function and both $t$ and $\tanh(t)$ are odd functions, two terms vanish by the definition of the Lebesgue integral. Thus $\tau(b)=0$. Since $\eta$ is a Gabor frame in $\Xi$, then it satisfies $\overline{\nabla}\eta=\eta \cdot b$ for some $b$. Moreover, if $\eta=\xi \cdot W$, then $\tau(b)=\lambda+\tau(W^{-1}\overline{\partial}W)$ by (\ref{E:Gaugetransformed}). Thus  we must have $\lambda=-\tau(W^{-1}\overline{\partial}W) \in  \pi i (\mathbb{Z}+i\mathbb{Z})$. The last statement follows from the fact that two Gaussians are gauge equivalent if and only if one is gauged to the other via $U^mV^n$(for some $m,n$) only up to constants \cite{DKL:Sigma}.
\begin{rem}
The second statement in Proposition \ref{P:HypersecantGaussian} is related to the author's question  in \cite{Lee:Sigma} if any two solutions of (\ref{E:EL}) are equivalent under a $\mathbb{Z}^2$-action which is defined by an inner automorphisms $\Ad W$ where $W$ is a unitary of the form $U^mV^n$ modulo $\mathbb{T}$. The answer is no as we see from the above example. 
\end{rem}
\end{proof}

\section{Acknowledgements}
The author would like to thank F. Luef for a series of lectures of his work during his visit to Korea. He also would like to express his gratitude to  Hun Hee Lee for a final tip in proving Theorem \ref{T:Trace}. I 
\begin{bibdiv}

\begin{biblist}
\bib{CO:NCG}{article}{
   author={Connes, A.},
   title={$C\sp*$-alg\`{e}bres et g\'{e}ometrie diff\'{e}rentille},
   journal={C.R. Acad. Sci. Paris S\'{e}r. A},
   volume={290},
   number={13}
   date={1980},
   pages={599--604},
   issn={},
   review={MR1690050(81c:46053)},
   doi={},
}
\bib{B:QT}{article}{
   author={Boca, F.},
   title={Projections in rotation algebras and theta functions},
   journal={Comm. Math. Phys.},
   volume={202},
   date={1999},
   number={2}
   pages={325--357},
   issn={},
   review={MR1690050(2000j:46101)},
   doi={},
}
\bib{CR:YM}{article}{
   author={Connes, A.},
   author={Rieffel, M.}
   title={ Yang-Mills for noncommutative two-tori},
   journal={Contemp. Math.},
   volume={62},
   date={1987},
   pages={335--348},
   issn={0075-4102},
   review={\MR{454645 (56\#:12894)}},
   doi={},
}

\bib{DKL:Sigma}{article}{
   author={Dabrowski, L.},
   author={Krajewski, T.},
   author={Landi, G.},
   title={Some properties of Non-linear $\sigma$-models in noncommutative geometry},
   journal={Int. J. Mod. Phys.},
   volume={B14},
   date={2000},
   pages={2367--2382},
   review={\MR{0470685 (57 \#10431)}},
}

\bib{DKL:Sigma2}{article}{
   author={Dabrowski, L.},
   author={Krajewski, T.},
   author={Landi, G.},
   title={Non-linear $\sigma$-models in noncommutative geometry: fields with values in finite spaces},
   journal={Mod. Physics Lett. A},
   volume={18},
   date={2003},
   pages={2371--2379},
   review={},
}

\bib{DLF:Sigma}{article}{
   author={Dabrowski, L},
   author={Landi, G.},
   author={Luef, F}
   title={Sigma-model solitons on noncommutative spaces},
   journal={Lett. Math. Phys.},
   volume={105},
   date={2015},
   number={12},
   pages={1633--1688},
   issn={},
   review={\MR{3420593}},
   doi={10.1007/s11005-015-0790-x}
}

\bib{EE:Irrational}{article}{
  author={Elliott, G.},
  author={Evans, D.}, 
  title={The structure of the irrational rotation $C\sp*$-algebra}, 
  journal={Ann. of Math.},
   volume={(2)138},
  date={1993},  
number={3}, 
pages={477–-501}, 
review={\MR{1247990 (94j:46066)}}
}

\bib{FS}{book}{
author={Feichtinger, H},
author={Strohmer},
title={Gabor analysis and Algorithms},
publisher={Springer Science+Business Media, LLC}
date={1998},
}
\bib{FL:Frame}{article}{
   author={Frank, M.},
   author={Larson, D.},
   title={Frames in Hilbert $C\sp*$-modules and $C\sp*$-algebras},
   journal={J. Operator Theory},
   volume={},
   date={2002},
   number={},
   pages={273--314},
   issn={},
   doi={},
}
\bib{GL:Gabor}{article}{
   author={Gr\"{o}chenig, K.},
   author={Lyubarskii, Y.}
   title={Gabor (super)frames and totally positive functions},
   journal={Duke  Math. J},
   volume={162},
   date={2013},
   number={5},
   pages={1003--1031},
   issn={},
   review={},
   doi={},
}

\bib{L:Harmonic}{article}
{author={Landi, G.}
title={On harmonic maps in noncommutative geometry},
 journal={ Non-commutative Geometry and Number Theory \,Springer}, 
 pages={217--234},
 date={2006},
 }

\bib{Lee:Sigma}{article}{
   author={Lee, H.},
   title={A note on nonlinear $\sigma$-models in noncommutative geometry},
   journal={IDAQP},
   volume={19},
   date={2016},
   number={1},
   pages={},
   issn={0022-1236},
   review={\MR{2733573 (2011k:46079)}},
   doi={10.1142/S0239025716500065},
}
\bib{L:Gabor}{article}{
   author={F. Luef},
   title={Projections in noncommutative tori and Gabor frames},
   journal={Proc. A.M.S.},
   volume={139},
   date={2010},
   number={2},
   pages={571--582},
   issn={0002-9939},
   review={},
   doi={},
}
\bib{L:VBoverNT}{article}{
   author={Luef, F.},
   title={Projective modules over noncommutative tori are multi-window Gabor frames for modulation spaces},
   journal={J. Funct. Anal.},
   volume={257},
   date={2009},
   number={6},
   pages={1921--1946},
   issn={0379-4024},
   review={\MR{2540994}},
}
\bib{MR:Sigma}{article}{
   author={Mathai, V.},
  author={Rosenberg, J.}
   title={A noncommutative sigma-model},
   journal={J. Noncommut. Geom.},
   volume={5},
   date={2011},
   number={},
   pages={265--294},
   issn={},
   review={},
   doi={},
}
\bib{P:Ex}{article}{
   author={Polishchuck, P.},
   title={Analogues of the exponential map associated with complex structures on noncommutative two-tori},
   journal={Pacific J. Math.},
   volume={226},
   date={2006},
   number={1},
   pages={153--178},
   issn={},
   review={},
   doi={},
}
\bib{R:NonTorus}{article}{
   author={Rieffel, M.},
   title={$C\sp*$-algebras associated with irrational rotations},
   journal={Pacific. J. Math.},
   volume={93},
   date={1981},
   number={1},
   pages={415--429},
   review={\MR{623572(83b:46087)}},
   doi={},
   }
 \bib{Ro:Laplace}{article}{
   author={Rosenberg, J.},
   title={Noncommutative variations on Laplace equation},
   journal={Anal. PDE.},
   volume={1},
   date={2008},
   number={1},
   pages={   },
   review={\MR{2444094 (2009f:58041)}},
   doi={},
   }
\end{biblist}

\end{bibdiv}

\end{document}